\documentclass[a4paper, 12pt]{article}

\setlength{\textheight}{23cm}

\setlength{\textwidth}{16cm}

\setlength{\oddsidemargin}{0cm}

\setlength{\evensidemargin}{0cm}

\setlength{\topmargin}{0cm}

\usepackage{amssymb, amscd, amsthm}
\usepackage[mathscr]{eucal}

\theoremstyle{plain}
\newtheorem{theorem}{Theorem}[section]
\newtheorem{lemma}[theorem]{Lemma}
\newtheorem{proposition}[theorem]{Proposition}
\newtheorem{corollary}[theorem]{Corollary}

\theoremstyle{definition}

\newtheorem{remark}[theorem]{Remark}
\newtheorem{remarks}[theorem]{Remarks}
\newtheorem{example}[theorem]{Example}

\newcommand\bA{{\mathbb A}}

\newcommand\bF{{\mathbb F}}
\newcommand\bG{{\mathbb G}}

\newcommand\bR{{\mathbb R}}

\newcommand\bZ{{\mathbb Z}}

\newcommand\cC{{\mathcal C}}

\newcommand\cM{{\mathcal M}}

\newcommand\cO{{\mathcal O}}

\newcommand\fm{\mathfrak{m}}

\newcommand\aff{{\rm aff}}

\newcommand\ant{{\rm ant}}

\newcommand\diag{{\rm diag}}

\newcommand\id{{\rm id}}

\newcommand\sq{{\rm sq}}

\newcommand\End{{\rm End}}

\newcommand\GL{{\rm GL}}

\newcommand\Hom{{\rm Hom}}
\newcommand\Ima{{\rm Im}}

\newcommand\Sym{{\rm Sym}}

\title{On Algebraic Semigroups and Monoids, II}

\author{Michel Brion}

\date{}

\begin{document}

\maketitle
 
\begin{abstract}
Consider an algebraic semigroup $S$ and its closed subscheme 
of idempotents, $E(S)$. When $S$ is commutative, we show that 
$E(S)$ is finite and reduced; if in addition $S$ is irreducible, 
then $E(S)$ is contained in a smallest closed irreducible subsemigroup 
of $S$, and this subsemigroup is an affine toric variety. 
It follows that $E(S)$ (viewed as a partially ordered set) is the set 
of faces of a rational polyhedral convex cone. On the other hand, 
when $S$ is an irreducible algebraic monoid, we show that $E(S)$ 
is smooth, and its connected components are conjugacy classes 
of the unit group.  
\end{abstract}

\section{Introduction}
\label{sec:intro}

This article continues the study of algebraic semigroups and monoids
(not necessarily linear), began in \cite{Rittatore98, Rittatore07}
for monoids and in \cite{Brion12, Brion-Renner} for semigroups.
The idempotents play an essential r\^ole in the structure of 
abstract semigroups; by results of [loc.~cit.], the idempotents
of algebraic semigroups satisfy remarkable existence and finiteness
properties. In this article, we consider the subscheme of idempotents, 
$E(S)$, of an algebraic semigroup $S$ over an algebraically closed 
field; we show that $E(S)$ has a very special structure under
additional assumptions on $S$. Our first main result states:

\begin{theorem}\label{thm:mon}
Let $M$ be an irreducible algebraic monoid, and $G$ its unit group. 
Then the scheme $E(M)$ is smooth, and its connected components
are conjugacy classes of $G$.
\end{theorem}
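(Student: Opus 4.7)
The plan is to exploit the conjugation action of $G$ on $M$: it preserves $E(M)$, so each idempotent $e$ has a $G$-orbit $G \cdot e$ that is a smooth locally closed subvariety of $E(M)$. I would show that the orbit morphism $G/C_G(e) \to E(M)$ is an open immersion at the base point for every $e$; granted this, $E(M)$ is smooth everywhere and the orbits---being irreducible and mutually disjoint open subsets---are precisely the connected components of $E(M)$. By an \'etaleness argument for monomorphisms of schemes, this reduces to showing that the differential $\phi : \Lie(G) \to T_e M$, $X \mapsto Xe - eX$, has image equal to $T_e E(M)$.

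Both tangent spaces are computed via a Peirce decomposition. The equation $x^2 = x$ defining $E(M)$ gives
\[
T_e E(M) = \{ X \in T_e M : dL_e(X) + dR_e(X) = X \},
\]
where $L_e, R_e : M \to M$ denote left and right multiplication by $e$. Since $L_e^2 = L_e$, $R_e^2 = R_e$ and $L_e R_e = R_e L_e$, the endomorphisms $dL_e, dR_e$ of $T_e M$ are commuting idempotents, yielding a decomposition $T_e M = T_{11} \oplus T_{10} \oplus T_{01} \oplus T_{00}$ into joint eigenspaces. Substituting gives $T_e E(M) = T_{10} \oplus T_{01}$, in any characteristic. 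A direct calculation shows $\Ima(\phi) \subseteq T_{10} \oplus T_{01}$: the $T_{11}$-components of $Xe$ and $eX$ both equal $eXe$ and cancel, while $Xe \in T_e(Me)$ and $eX \in T_e(eM)$ rule out any $T_{00}$-contribution.

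The geometric input for the reverse inclusion is that the $G$-orbit $Ge$ under left translation is a smooth locally closed subvariety of $M$ containing $e$, and since $G$ is open dense in the irreducible $M$, $Ge$ is dense in $Me$. Consequently $Me$ is smooth at $e$, one has $T_e(Me) = T_{11} \oplus T_{01}$, and the map $X \mapsto Xe$ from $\Lie(G)$ to $T_e M$ surjects onto $T_{11} \oplus T_{01}$. By symmetry, $X \mapsto eX$ surjects onto $T_{11} \oplus T_{10}$; and applying the same reasoning to the submonoid $eMe$ with its unit group $(eMe)^\times$ gives $T_e(eMe) = T_{11}$. To deduce surjectivity of $\phi$ from these separate surjections, I would upgrade them to the joint surjection
\[
\Lie(G) \twoheadrightarrow \{ (b, a) \in T_e(Me) \times T_e(eM) : b_{11} = a_{11} \},
\]
by showing that the morphism $G \to Me \times_{eMe} eM$, $g \mapsto (ge, eg)$, is smooth at $g = 1$; once joint surjectivity holds, any $Y = Y_{10} + Y_{01} \in T_{10} \oplus T_{01}$ is realized as $\phi(X)$ for $X$ with $Xe = Y_{01}$ and $eX = -Y_{10}$. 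Its fiber over $(e, e)$ is $G^{(e)} \cap G_{(e)} = \{ g \in G : ge = eg = e \}$, a closed subgroup-scheme of $G$, and the target is smooth at $(e, e)$ by the previous smoothness statements.

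The hard part is this last smoothness claim, equivalent to the dimension identity $\dim G - \dim C_G(e) = \dim T_{10} + \dim T_{01}$. It is the substantive structural statement about $M$ locally around an idempotent, relating the one-sided stabilizers $G^{(e)}, G_{(e)}$ and the unit group $(eMe)^\times$ to the centralizer $C_G(e)$; it is also where the monoid structure of $M$ (as opposed to mere semigroup structure) really enters. Once it is in place, the tangent-space identity $T_e(G \cdot e) = T_e E(M)$, and hence the theorem, follow.
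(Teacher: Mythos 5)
Your reduction of the theorem to the surjectivity of the differential $\phi : \Lie(G) \to T_e(M)$, $X \mapsto Xe - eX$, founders on exactly the step you defer as ``the hard part,'' and that step is not a deferred technicality: as you have set it up it is false in positive characteristic, and unproved in characteristic zero. The claim that $X \mapsto Xe$ maps $\Lie(G)$ \emph{onto} $T_e(Me) = T_{11} \oplus T_{01}$ is the assertion that the orbit map $g \mapsto ge$ is separable, i.e.\ that the scheme-theoretic stabilizer $\{ g \in G \mid ge = e \}$ is smooth. The paper's own example in Remark \ref{rem:sub} (i) kills this: for $M \subset \bA^3$ the hypersurface $z^n = xy^n$ with pointwise multiplication and $e = (1,0,0)$, one has $G \cong \bG_m^2$, the stabilizer of $e$ under left translation is $\bG_m \times \mu_n$, and when $\charc(k)$ divides $n$ the differential of $g \mapsto ge$ at $1$ is identically zero even though $Me \cong \bA^1$; so $\Ima(X \mapsto Xe) = 0 \subsetneq T_{11}$. (There $E(M)$ is still smooth because $M$ is commutative, but your intermediate step is dead.) This is precisely the obstruction the paper flags in Remark \ref{rem:smooth} (iii): there is in general no usable relation between the tangent space at the smooth point $1_M$ and the tangent space at the (typically singular) point $e$ --- in the example $\dim T_1(M) = 2$ while $\dim T_e(M) = 3$. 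Even in characteristic zero, where the one-sided surjections do hold by separability, the joint surjectivity (equivalently your dimension identity $\dim G - \dim C_G(e) = \dim T_{10} + \dim T_{01}$) is the entire content of the theorem, and you have not supplied an argument for it.

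The paper's proof avoids the unit element altogether and works at $e$: the $G \times G$-orbit $GeG$ is smooth and contains the explicitly constructed locally closed subvariety $UeU \cong {_e{}Me} \times G(eMe) \times eM_e$ as an open neighborhood of $e$ (Lemma \ref{lem:UeU} plus the density of $G \cap U$ in $U$), which forces ${_e{}Me}$ and $eM_e$ to be smooth at $e$; then Lemma \ref{lem:idemUeU} provides a chart $\gamma(x,y) = x(yx)^{-1}y$ identifying an open neighborhood of $e$ in $E(M)$ with an open subset of ${_e{}Me} \times eM_e$, whose dimension at $e$ equals $\dim T_{01} + \dim T_{10} = \dim T_e E(M)$. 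That parameterization of the idempotents near $e$ by data living at $e$ --- rather than a tangent map from $1$ --- is the missing idea; it is characteristic-free, and it is also what delivers the dimension identity you need. (The identification of the components as conjugacy classes then still requires an argument that each class is closed, which the paper gets by reducing to the linear case and using that unipotent orbits on affine varieties are closed; your ``every orbit is open in $E(M)$, hence closed'' route would also work once the openness is actually established.)
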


Note that the scheme of idempotents of an algebraic semigroup is 
not necessarily smooth. Consider indeed an arbitrary variety $X$ 
equipped with the composition law $(x,y) \mapsto x$; then $X$ is 
an algebraic semigroup, and $E(X)$ is the whole $X$. Yet the scheme 
of idempotents is reduced for all examples that we know of; it is 
tempting to conjecture that $E(S)$ is reduced for any algebraic 
semigroup $S$. 

\medskip
When $S$ is commutative, $E(S)$ turns out to be a combinatorial
object, as shown by our second main result:

\begin{theorem}\label{thm:com}
Let $S$ be a commutative algebraic semigroup. Then the scheme 
$E(S)$ is finite and reduced. If $S$ is irreducible, then 
$E(S)$ is contained in a smallest closed irreducible subsemigroup 
of $S$; moreover, this subsemigroup is a toric monoid.
\end{theorem}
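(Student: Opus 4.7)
I would separate the two assertions. For finiteness and reducedness, I would compute $T_e E(S)$ at each idempotent $e$. Realizing $E(S) \subseteq S$ as the equalizer of $\id_S$ and the squaring morphism $\sigma = \mu \circ \Delta$, the tangent space is $\{v \in T_e S : d\sigma_e(v) = v\}$. By commutativity, $d\sigma_e(v) = d\mu_{(e,e)}(v,v) = 2\,d\lambda_e(v)$, where $\lambda_e : S \to S$ is multiplication by $e$. Since $\lambda_e \circ \lambda_e = \lambda_e$ (from $e^2 = e$), the differential $d\lambda_e$ is an idempotent linear endomorphism of $T_e S$. Applying $d\lambda_e$ to the equation $2\,d\lambda_e(v) = v$ forces $d\lambda_e(v) = 0$ and hence $v = 0$, in every characteristic. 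So each $e$ is a reduced isolated point of $E(S)$; as $E(S)$ is of finite type over $k$, it is finite and reduced.

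For the second assertion, assume $S$ is irreducible. By the above, $E(S)$ is a finite commutative semilattice. My first task would be to reduce to the monoid case by producing a (unique) maximum idempotent $e_+ \in E(S)$, so that $M := e_+ S$ is a closed irreducible submonoid of $S$ with identity $e_+$ and $E(M) = E(S)$. Let $G \subseteq M$ be its unit group---a commutative algebraic group, open and dense in $M$---and $T \subseteq G$ its (unique) maximal subtorus; set $M_0 := \overline{T}$. The torus $T$ acts on $M_0$ by multiplication with dense open orbit $T$, so $M_0$ is an affine toric variety. The inclusion $E(S) \subset M_0$ follows from the general result that every idempotent of an algebraic monoid is conjugate, under its unit group, to one in the closure of a maximal torus (here no conjugation is needed, $G$ being commutative).

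For minimality, given any closed irreducible subsemigroup $W \subseteq S$ with $E(S) \subseteq W$, the set $e_+ W = W \cap M$ is a closed irreducible subsemigroup of $M$ containing $E(M)$, hence a submonoid with identity $e_+$. Its own maximal subtorus $T_W \subseteq T$ then gives $\overline{T_W} \subseteq e_+ W$, a toric submonoid whose idempotent set is $E(e_+ W) = E(S)$. Since the face poset of a rational polyhedral cone determines its dimension, and $\overline{T_W}$ and $M_0$ have the same face poset (namely $E(S)$), we obtain $\dim T_W = \dim T$; combined with $T_W \subseteq T$, this forces $T_W = T$ and thus $M_0 \subseteq W$. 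So $M_0$ is the smallest closed irreducible subsemigroup containing $E(S)$. The principal obstacle in this plan is the semigroup-to-monoid reduction---the existence of the maximum idempotent $e_+$---which uses the irreducibility of $S$ and the finiteness of $E(S)$ together with the structural results from the earlier papers cited above; once this is in hand, the toric construction and the face-lattice argument proceed in standard fashion.
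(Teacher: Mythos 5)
Your first half (finiteness and reducedness of $E(S)$) is correct and is essentially the paper's argument: the paper also computes $T_e(E(S))=\{z: 2f(z)=z\}$ with $f=d\lambda_e$ idempotent and concludes $T_e(E(S))=0$; your trick of applying $d\lambda_e$ once more is an equivalent way of splitting into the $0$- and $1$-eigenspaces. The reduction to a monoid via a largest idempotent $e_+$ (the paper's $e_1$, Proposition \ref{prop:max}, which rests on the $\pi$-regularity result of Brion--Renner) and the inclusion $E(S)\subseteq\overline{T}$ (Proposition \ref{prop:bart}, via Theorem \ref{thm:ud}) also match the paper.

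However, the second half contains a genuine error: the smallest closed irreducible subsemigroup containing $E(S)$ is \emph{not} $M_0=\overline{T}$. Take $S=\bG_m\times\bA^1$ with coordinatewise multiplication. Then $E(S)=\{(1,0),(1,1)\}$, $T=\bG_m\times\bG_m$ and $\overline{T}=S$, but $W=\{1\}\times\bA^1$ is a closed irreducible subsemigroup containing $E(S)$ and strictly contained in $\overline{T}$. The flawed step is the claim that ``the face poset of a rational polyhedral cone determines its dimension'': it does not, because the lineality space (the largest linear subspace of the cone) is invisible to the face poset; the poset only determines the length of a maximal chain, i.e.\ the \emph{codimension} of the minimal face. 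In the example, $\overline{T_W}=W$ has cone $\bR_{\geq 0}\subset\bR$ and $\overline{T}$ has cone $\bR\times\bR_{\geq 0}\subset\bR^2$; both face posets are two-element chains, yet $\dim T_W=1\neq 2=\dim T$, so your conclusion $T_W=T$ fails. The paper's answer (Propositions \ref{prop:zero} and \ref{prop:env}) is $\overline{T_0}$, where $T_0$ is the reduced neutral component of the stabilizer $T_{e_0}$ of the \emph{smallest} idempotent $e_0$: one shows $\dim T_0$ equals the length of a maximal chain in $E(S)$ (the codimension of the minimal face), which is the quantity the face poset actually controls, and then any closed irreducible subsemigroup $W\supseteq E(S)$ has $G(W)_0\subseteq T_0$ of the same dimension, forcing $\overline{T_0}\subseteq W$. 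Your argument proves only that $E(S)$ lies in \emph{some} closed irreducible toric subsemigroup, not that a smallest one exists.
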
 

By a toric monoid, we mean an irreducible algebraic monoid $M$ with 
unit group being a torus; then $M$ is affine, as follows e.g. from
\cite[Thm.~2]{Rittatore07}. Thus, $M$ may be viewed as an affine 
toric variety (not necessarily normal). Conversely, every such 
variety has a unique structure of algebraic monoid that extends 
the multiplication of its open torus (see e.g.
\cite[Prop.~1]{Rittatore98}). So we may identify the toric monoids 
with the affine toric varieties. Toric monoids have been studied 
by Putcha under the name of connected diagonal monoids 
(see \cite{Putcha81}); they have also been investigated by Neeb 
in \cite{Neeb92}. 

\medskip

In view of Theorem \ref{thm:com} and of the structure of toric 
monoids, the set of idempotents of any irreducible commutative 
algebraic semigroup, equipped with its natural partial order, 
is isomorphic to the poset of faces of a rational polyhedral 
convex cone. 

\medskip

Theorem \ref{thm:com} extends readily to the case where $S$ has 
a dense subsemigroup generated by a single element;
then $S$ is commutative, but not necessarily irreducible. 
Thereby, one associates an affine toric variety with any point 
of an algebraic semigroup; the corresponding combinatorial data 
may be seen as weak analogues of the spectrum of a linear operator 
(see Example \ref{ex:lin} for details). This construction might
deserve further study.

\medskip

This article is organized as follows. In Subsection 
\ref{subsec:exi}, we present simple proofs of some basic results, 
first obtained in \cite{Brion12, Brion-Renner} by more complicated 
arguments; also, we prove the first assertion of Theorem 
\ref{thm:com}. Subsection \ref{subsec:loc} investigates the local
structure of an algebraic semigroup at an idempotent, in analogy 
with the Peirce decomposition,
\[ R = eRe \oplus (1-e)Re \oplus eR(1-e) \oplus (1-e)R(1-e), \]
of a ring $R$ equipped with an idempotent $e$. As an application,
we show that the isolated idempotents of an irreducible algebraic 
semigroup are exactly the central idempotents (Proposition 
\ref{prop:cent}). In Subsection \ref{subsec:smo}, we obtain
a slightly stronger version of Theorem \ref{thm:mon}, by 
combining our local structure analysis with results of Putcha 
on irreducible linear algebraic monoids (see 
\cite[Chap.~6]{Putcha88}). As an application, we generalize
Theorem \ref{thm:mon} to the intervals in $E(S)$, where $S$ is an 
irreducible algebraic semigroup in characteristic zero 
(Corollary \ref{cor:int}). 

\medskip

We return to commutative semigroups in Subsection 
\ref{subsec:pos}, and show that every irreducible commutative
algebraic semigroup has a largest closed toric submonoid 
(Proposition \ref{prop:bart}). The structure of toric monoids 
is recalled in Subsection \ref{subsec:toric}, and Theorem 
\ref{thm:com} is proved in the case of such monoids. 
The general case is deduced in Subsection \ref{subsec:env}, 
which also contains applications to algebraic semigroups having 
a dense cyclic subsemigroup (Corollary \ref{cor:env}). 

\medskip

In the final Subsection \ref{subsec:fin}, we consider those 
irreducible algebraic semigroups $S$ such that $E(S)$ is finite. 
We first show how to reduce their structure to the case where $S$ 
is a monoid and has a zero; then $S$ is linear in view of 
\cite[Cor.~3.3]{Brion-Rittatore}). Then we present another proof 
of a result of Putcha: any irreducible algebraic monoid having
a zero and finitely many idempotents must have a solvable unit group 
(see \cite[Cor.~10]{Putcha82}, and \cite[Prop.~6.24]{Putcha88}
for a generalization). Putcha also showed that any irreducible 
linear algebraic monoid with nilpotent unit group has finitely many 
idempotents, but this does not extend to solvable unit groups 
(see \cite[Thm.~1.12, Ex.~1.15]{Putcha81}). We refer to work of
Huang (see \cite{Huang96a,Huang96b}) for further results on irreducible 
linear algebraic monoids having finitely many idempotents.

\medskip

\noindent
{\bf Notation and conventions}.
Throughout this article, we consider varieties and schemes over a fixed 
algebraically closed field $k$. We use the textbook \cite{Hartshorne}
as a general reference for algebraic geometry. Unless otherwise stated, 
schemes are assumed to be separated and of finite type over $k$; 
a \emph{variety} is a reduced scheme (in particular, varieties are not 
necessarily irreducible). By a \emph{point} of a variety $X$, we mean 
a $k$-rational point; we identify $X$ with its set of points equipped
with the Zariski topology and with the structure sheaf.

An \emph{algebraic semigroup} is a variety $S$ equipped with an 
associative composition law $\mu : S \times S \to S$. For simplicity, 
we denote $\mu(x,y)$ by $xy$ for any $x,y \in S$. A point 
$e \in S$ is \emph{idempotent} if $e^2 = e$. The set of idempotents 
is equipped with a partial order $\leq$ defined by $e \leq f$ if 
$e = ef = fe$. Also, the idempotents are the $k$-rational points 
of a closed subscheme of $S$: the scheme-theoretic preimage of 
the diagonal under the morphism $S \to S \times S$, 
$x \mapsto (x^2,x)$. We denote that subscheme by $E(S)$.

An \emph{algebraic monoid} is an algebraic semigroup $M$
having a neutral element, $1_M$. The \emph{unit group} of $M$
is the subgroup of invertible elements, $G(M)$; this is an
algebraic group, open in $M$ (see \cite[Thm.~1]{Rittatore98} 
in the case where $M$ is irreducible; the general case follows
easily, see \cite[Thm.~1]{Brion12}).

We shall address some rationality questions for algebraic semigroups,
and use \cite[Chap.~11]{Springer} as a general reference for basic 
rationality results on varieties. As in [loc.~cit.], we fix a subfield 
$F$ of $k$, and denote by $F_s$ the separable closure of $F$ in $k$; 
the Galois group of $F_s$ over $F$ is denoted by $\Gamma$. 
We say that an algebraic semigroup $S$ is \emph{defined over $F$},
if the variety $S$ and the morphism $\mu : S \times S \to S$ are 
both defined over $F$.

\section{The idempotents of an algebraic semigroup}
\label{sec:idem}

\subsection{Existence}
\label{subsec:exi}

We first obtain a simple proof of the following basic result
(\cite[Prop.~1]{Brion12}, proved there by reducing to a finite 
field):

\begin{proposition}\label{prop:exist}
Let $S$ be an algebraic semigroup. Then $S$ has an idempotent.
\end{proposition}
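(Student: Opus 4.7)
The plan is Noetherian induction. Since $S$ is a scheme of finite type over $k$, the non-empty family of closed non-empty subsemigroups of $S$ admits a minimal element $T$, and any idempotent of $T$ is an idempotent of $S$; so it suffices to produce an idempotent in $T$, which I would do by contradiction, assuming $T$ has none.

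Under this assumption, two preliminary observations follow from minimality. First, for each $y \in T$, the set of powers $\{y^n : n \geq 1\}$ is a subsemigroup, so its Zariski closure is a closed non-empty subsemigroup of $T$, which by minimality equals $T$; hence the powers of every element of $T$ are Zariski-dense in $T$. Second, for each $y \in T$ and $n \geq 1$, the closed subset $I_{y,n} := \{z \in T : y^n z = y^n\}$ is closed under multiplication (if $y^n z = y^n z' = y^n$, then $y^n(zz') = (y^n z)z' = y^n z' = y^n$), so if non-empty it is a closed subsemigroup of $T$ and hence equals $T$ by minimality; taking $z = y^n$ would then give $y^{2n} = y^n$, i.e., $y^n$ idempotent, contradicting the standing assumption. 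Therefore $y^n \notin y^n T$ for every $y \in T$ and $n \geq 1$.

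To conclude, fix $x \in T$ and consider the descending chain of constructible subsemigroups $xT \supseteq x^2T \supseteq \cdots$; each $x^n T = \mu_{x^n}(T)$ is indeed a subsemigroup since $(x^n a)(x^n b) = x^n(ax^n b) \in x^n T$, and its closure is a closed non-empty subsemigroup of $T$, equal to $T$ by minimality. So $x^n T$ is dense in $T$. A dense constructible subset of a Noetherian scheme contains a dense open subset of each irreducible component (a standard consequence of Chevalley's theorem), and so $C_n := \overline{T \setminus x^n T}$ is a proper closed subset of $T$. The $C_n$ form an ascending chain of proper closed subsets, which stabilizes by Noetherianity at some $F \subsetneq T$. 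But by the second observation, $x^n \in T \setminus x^n T \subseteq C_n \subseteq F$ for every $n$, so $\overline{\{x^n : n \geq 1\}} \subseteq F \subsetneq T$, contradicting the first observation. The one step requiring any genuine technical care is the assertion that $C_n$ is proper, which is the main (though routine) use of constructible set theory in the argument.
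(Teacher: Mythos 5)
Your setup is sound and runs parallel to the paper's: reduce by noetherian induction to a minimal closed subsemigroup $T$, note that the powers of every element are dense, and observe (via the closed subsemigroup $I_{y,n}$) that if $T$ had no idempotent then $y^n \notin y^n T$ for all $y$ and $n$. That last observation is a clean packaging of the same computation the paper uses at the very end ($y = y^m z \Rightarrow (y^{m-1}z)^2 = y^{m-1}z$). The claim that each $C_n = \overline{T \setminus x^nT}$ is a proper closed subset is also correct (a dense constructible subset of $T$ contains a nonempty open subset of $T$, so its complement has proper closure).

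The fatal step is: ``The $C_n$ form an ascending chain of proper closed subsets, which stabilizes by Noetherianity.'' Noetherianity of $T$ gives the \emph{descending} chain condition on closed subsets (equivalently, the ascending chain condition on \emph{open} subsets); ascending chains of closed subsets need not stabilize. For instance, in $\bA^1$ the sets $C_n = \{1,2,\dots,n\}$ form a strictly increasing chain of proper closed subsets. Since $x^nT \supseteq x^{n+1}T$, your $C_n$ do increase, and nothing prevents $\bigcup_n C_n$ from being dense in $T$ --- indeed, a countable union of proper closed subsets can be dense even over $\bC$ (take the points of $\bA^1(\overline{\bQ})$), so the gap cannot be closed by a soft topological argument. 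This is exactly the difficulty the paper's proof is built to overcome: it writes $S$ as a \emph{countable union of constructible sets} $S_n = \{y : x^n \in yS\}$, passes to an uncountable base field, and invokes Lemma~\ref{lem:const} to force some $S_n$ to contain a nonempty open (hence dense) subset, after which the density of the powers of $x$ produces the relation $x^n = x^{mn}z$. You would need to import that uncountability/constructibility lemma (or the reduction to finite fields used in \cite{Brion12}) to make your final step work; as written, the conclusion $\overline{\{x^n : n\ge 1\}} \subseteq F \subsetneq T$ does not follow.
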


\begin{proof}
Arguing by noetherian induction, we may assume that $S$ has no proper 
closed subsemigroup. As a consequence, the set of powers $x^n$,  
where $n \geq 1$, is dense in $S$ for any $x \in S$; in particular, 
$S$ is commutative. Also, $y S$ is dense in $S$ for any $y \in S$; 
since $yS$ is constructible, it contains a nonempty open subset 
of $S$. Thus, there exists $n = n(x,y) \geq 1$ such that $x^n \in y S$.

Choose $x \in S$. For any $n \geq 1$, let 
\[ S_n := \{ y \in S ~\vert~ x^n \in yS \} .\] 
Then each $S_n$ is a constructible subset of $S$, since $S_n$ is 
the image of the closed subset 
$\{ (y,z) \in S \times S ~\vert~ yz = x^n \} $
under the first projection. Moreover, $S = \bigcup_{n \geq 1} S_n$.

To show that the closed subscheme $E(S)$ is nonempty, we may replace 
$k$ with any larger algebraically closed field, and hence assume that 
$k$ is uncountable. Then, by the next lemma, there exists $n \geq 1$ 
such that $S_n$ contains a nonempty open subset of $S$. Since the set 
of powers $x^{mn}$, where $m \geq 1$, is dense in $S$, it follows that
$S_n$ contains some $x^{mn}$. Equivalently, there exists $z \in S$
such that $x^n = x^{mn} z$. Let $y := x^n$, then $y = y^m z$ and hence
$y^{m-1} = y^{2m-2} z$. Thus, $y^{m-1} z$ is idempotent.
\end{proof}

\begin{lemma}\label{lem:const}
Let $X$ be a variety, and $(X_i)_{i \in I}$ a countable family of 
constructible subsets such that $X = \bigcup_{i \in I} X_i$. 
If $k$ is uncountable, then some $X_i$ contains a nonempty open 
subset of $X$. 
\end{lemma}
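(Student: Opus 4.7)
The plan is to argue by contradiction: assume that no $X_i$ contains a nonempty open subset of $X$, and show that the $X_i$ cannot cover $X$.

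A first reduction passes to the case where $X$ is irreducible. If $Y_1, \ldots, Y_r$ are the irreducible components of $X$, each difference $Y_j \setminus \bigcup_{k \neq j} Y_k$ is open in $X$, so any nonempty open subset of $Y_j$ disjoint from the other components is automatically open in $X$. It therefore suffices to apply the lemma separately to the countable cover $\{X_i \cap Y_j\}_i$ of each $Y_j$. Next, replace each $X_i$ by its closure $Z_i := \overline{X_i}$. A dense constructible subset of an irreducible variety always contains a nonempty open set (writing it as a finite union of locally closed subsets, irreducibility forces at least one summand to be open and dense). The hypothesis therefore forces $Z_i \subsetneq X$ for every $i$, and the task reduces to showing that an irreducible variety over an uncountable algebraically closed field is not a countable union of proper closed subsets.

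For this, I would apply Noether normalization to an affine open $U$ of $X$, obtaining a finite surjective morphism $\pi : U \to \bA^d$, where $d = \dim X$. Since $\pi$ is finite, hence closed, and preserves dimension, each image $\pi(Z_i \cap U)$ is closed of dimension strictly less than $d$, i.e.~a proper closed subset of $\bA^d$; and these images cover $\bA^d = \pi(U)$. So it suffices to prove that $\bA^d$ itself is not a countable union of proper closed subsets, which I do by induction on $d$. For $d = 1$, a proper closed subset of $\bA^1$ is finite, so a countable union is countable, contradicting the uncountability of $k$.

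For $d \geq 2$, suppose $\bA^d = \bigcup_i W_i$ with each $W_i \subsetneq \bA^d$ closed, and consider the ``slice locus''
\[ T_i := \{ t \in k ~\vert~ \{t\} \times \bA^{d-1} \subseteq W_i \}. \]
Writing $W_i$ as the zero set of polynomials $f_j \in k[x_1, \ldots, x_d]$ and expanding each $f_j$ in $x_2, \ldots, x_d$ with coefficients in $k[x_1]$, one sees that $T_i$ is the common vanishing locus of these coefficients; if $T_i = \bA^1$, then all coefficients would vanish identically, forcing $W_i = \bA^d$. Hence each $T_i$ is a proper closed, and therefore finite, subset of $\bA^1$. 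The union $\bigcup_i T_i$ is then countable, so (using uncountability of $k$) some $t$ lies outside every $T_i$; for such $t$, the slice $\{t\} \times \bA^{d-1}$ is covered by the proper closed subsets $W_i \cap (\{t\} \times \bA^{d-1})$, contradicting the inductive hypothesis for $\bA^{d-1}$. The main obstacle is precisely this slicing step: one must upgrade the condition ``$\{t\} \times \bA^{d-1} \not\subseteq W_i$'' from merely generic to cofinite in $t$, which is where the explicit polynomial description of closed subsets of affine space is genuinely used.
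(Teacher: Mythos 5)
Your proof is correct. It shares the paper's overall skeleton --- reduce to an irreducible $X$ covered by countably many proper closed subsets, then induct on dimension with the one-dimensional case settled by counting points --- but the pivotal inductive step is handled quite differently. The paper stays with an arbitrary irreducible $X$ and invokes the fact that $X$ carries uncountably many irreducible hypersurfaces, so that some hypersurface $Y$ avoids containment in every $X_i$ (each closed irreducible $X_i\subsetneq X$ can contain at most one such hypersurface, namely itself); the induction then runs over general varieties. You instead use Noether normalization to push everything down to $\bA^d$ once and for all, and produce the good slice $\{t\}\times\bA^{d-1}$ by an explicit polynomial computation showing that each $W_i$ contains only finitely many such slices. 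What your route buys is self-containedness: the uncountability of the family of hypersurfaces, which the paper asserts without proof, is replaced by the elementary observation that a nonzero polynomial has finitely many roots, and your induction only ever involves affine spaces. What it costs is the extra machinery of Noether normalization and the (routine but necessary) verifications that finite morphisms are closed, surjective, and dimension-preserving, whereas the paper's hypersurface argument is shorter once the uncountability fact is granted. Both arguments are sound; yours is arguably the more robust one to write out in full detail.
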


\begin{proof}
Since each $X_i$ is constructible, it can be written as a finite 
disjoint union of irreducible locally closed subsets. We may thus 
assume that each $X_i$ is locally closed and irreducible; 
then we may replace $X_i$ with its closure, and thus assume that 
all the $X_i$ are closed and irreducible. We may also replace 
$X$ with any nonempty open subset $U$, and $X_i$ with $X_i \cap U$. 
Thus, we may assume in addition that $X$ is irreducible. We then 
have to show that $X_i = X$ for some $i \in I$.

We now argue by induction on the dimension of $X$. If $\dim(X) = 1$,
then each $X_i$ is either a finite subset or the whole $X$. 
But the $X_i$ cannot all be finite: otherwise, $X$, and hence $k$, 
would be countable. This yields the desired statement.

In the general case, assume that each $X_i$ is a proper subset of $X$.
Since the set of irreducible hypersurfaces in $X$ is uncountable, 
there exists such a hypersurface $Y$ which is not contained 
in any $X_i$. In other words, each $Y \cap X_i$ is a proper subset of 
$Y$. Since $Y = \bigcup_{i \in I} Y \cap X_i$, applying the induction 
assumption to $Y$ yields a contradiction. 
\end{proof}

Next, we obtain refinements of 
\cite[Prop.~4 (iii), Prop.~17 (ii)]{Brion12}, thereby proving
the first assertion of Theorem \ref{thm:com}:

\begin{proposition}\label{prop:com}
Let $S$ be a commutative algebraic semigroup. 

\noindent
{\rm (i)} The scheme $E(S)$ is finite and reduced.

\noindent
{\rm (ii)} $S$ has a smallest idempotent, $e_0$.

\noindent
{\rm (iii)} If the algebraic semigroup $S$ is defined over $F$,
then so is $e_0$.
\end{proposition}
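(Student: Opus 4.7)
The plan is to establish (i) by a short tangent-space computation at every idempotent, after which (ii) follows from the finiteness of $E(S)$ by multiplying all the idempotents, and (iii) follows from Galois invariance of that product. Commutativity enters crucially through the identity $d\mu_2|_e = 2L_e$ at any idempotent $e$, which forces the tangent space of $E(S)$ at $e$ to vanish in every characteristic.

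For (i), fix $e \in E(S)(k)$. Since $E(S)$ is the scheme-theoretic preimage of the diagonal of $S$ under $x \mapsto (x^2, x)$, its tangent space at $e$ equals $\ker(d\mu_2|_e - \id)$, where $\mu_2(x) = x^2$. The chain rule gives $d\mu_2|_e = L_e + R_e$, with $L_e, R_e : T_e S \to T_e S$ the differentials of left and right multiplication by $e$. Commutativity yields $L_e = R_e$, so $d\mu_2|_e = 2 L_e$, and since $\mu_e \circ \mu_e = \mu_e$, the endomorphism $L_e$ is itself idempotent. Decomposing $T_e S = V_0 \oplus V_1$ into the $0$- and $1$-eigenspaces of $L_e$, we see that $2L_e - \id$ acts as $-\id$ on $V_0$ and as $\id$ on $V_1$, with trivial kernel in every characteristic (in characteristic $2$ the formula degenerates to $2L_e - \id = -\id$, whose kernel is still $0$). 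Therefore $T_e E(S) = 0$, so by Nakayama's lemma $\cO_{E(S), e} = k$, and $E(S)$ is reduced and zero-dimensional at every closed point. Since $E(S)$ is of finite type over $k$, it is then a finite disjoint union of reduced $k$-points.

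Once (i) is in hand, (ii) is immediate: if $e_1, \dots, e_n$ are the finitely many idempotents, then $e_0 := e_1 \cdots e_n$ is idempotent by commutativity, and moving $e_i$ adjacent to itself in the product (using commutativity together with $e_i^2 = e_i$) gives $e_0 e_i = e_0$ for each $i$, whence $e_0 \leq e_i$. For (iii), since $E(S)$ is defined over $F$, the Galois group $\Gamma$ permutes the finite set $E(S)(k) = \{e_1, \dots, e_n\}$; the product $e_0$ is unchanged under any permutation of its factors, hence is $\Gamma$-fixed, and therefore defines an $F$-rational point.

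The only step requiring care is the tangent-space computation and its uniform treatment across characteristics; in particular one must verify that in characteristic $2$, where $d\mu_2|_e = 0$ and the eigenspace picture degenerates, the operator $2L_e - \id = -\id$ is nevertheless invertible, so that $T_e E(S) = 0$ persists. Once the vanishing of the tangent space at every idempotent is established, the scheme-theoretic conclusions of finiteness and reducedness follow by standard Nakayama and finite-type arguments, and (ii), (iii) follow with no further semigroup-theoretic input.
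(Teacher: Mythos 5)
Your proof is correct and follows essentially the same route as the paper: part (i) via the computation $T_e(\sq) = f_{\ell}+f_r = 2f$ with $f$ an idempotent endomorphism of $T_e(S)$, forcing $T_e(E(S))=0$ in every characteristic; part (ii) by taking the product of all idempotents; and part (iii) by Galois invariance of that product. The one point where the paper is more careful is in (iii): to have $\Gamma=\mathrm{Gal}(F_s/F)$ permute the idempotents, one must first know they are $F_s$-rational, which the paper deduces from the fact that $E(S)$ is a smooth (indeed \'etale) $F$-subscheme by (i), invoking \cite[Thm.~11.2.7]{Springer} --- your argument uses this implicitly, and you should make it explicit since over a non-perfect $F$ a $\Gamma$-fixed point of $S(k)$ need not a priori be $F$-rational.
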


\begin{proof}
(i) It suffices to show that the Zariski tangent space 
$T_e(E(S))$ is zero for any idempotent $e$. 
Since $E(S) = \{ x \in S ~\vert~ x^2 = x \}$ and $S$ is commutative,
we obtain
\[ T_e(E(S)) = \{ z \in T_e(S) ~\vert~ 2 f(z) = z \}, \]
where $f$ denotes the tangent map at $e$ of the multiplication
by $e$ in $S$ (see Lemma \ref{lem:tang} below for details on the 
determination of $T_e(E(S))$ when $S$ is not necessarily commutative). 
Moreover, $f$ is an idempotent endomorphism of the vector space
$T_e(S)$, and hence is diagonalizable with eigenvalues $0$ and $1$. 
This yields the desired vanishing of $T_e(E(S))$. 

(ii) By (i), the subscheme $E(S)$ consists of finitely many points 
$e_1, \ldots, e_n$ of $S$. Their product, $e_1 \cdots e_n =: e_0$, 
satisfies $e_0^2 = e_0$ and $e_0 e_i = e_0$ for $i = 1, \ldots, n$. 
Thus, $e_0$ is the smallest idempotent.

(iii) Assume that the variety $S$ and the morphism $\mu$ are defined 
over $F$; then $E(S)$ is a $F$-subscheme of $S$, and hence a smooth 
$F$-subvariety by (i). In view of \cite[Thm.~11.2.7]{Springer}, 
it follows that $E(S)$ (regarded as a finite subset of $S(k)$) 
is contained in $S(F_s)$; also, $E(S)$ is clearly stable by $\Gamma$. 
Thus, $e_0 \in S(F_s)$ is invariant under $\Gamma$, and hence 
$e_0 \in S(F)$.
\end{proof}

We now deduce from Proposition \ref{prop:com} another fundamental
existence result (which also follows from 
\cite[Thm.~1]{Brion-Renner}):

\begin{corollary}\label{cor:exist}
Let $S$ be an algebraic semigroup defined over $F$. If $S$ 
has an $F$-rational point, then it has an $F$-rational idempotent.
\end{corollary}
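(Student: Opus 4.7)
\emph{Proof plan.} The strategy is to reduce to a commutative subsemigroup defined over $F$, to which Proposition~\ref{prop:com}(iii) applies directly. Given an $F$-rational point $x \in S(F)$, every power $x^n$ ($n \geq 1$) again lies in $S(F)$, since the multiplication morphism $\mu$ is defined over $F$; in particular, all the $x^n$ are fixed by $\Gamma$.

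I would take $T$ to be the Zariski closure of $\{x^n : n \geq 1\}$ in $S$, equipped with the reduced induced subscheme structure, and verify that $T$ is a closed commutative subsemigroup. For the subsemigroup property, fix $m \geq 1$: the morphism of left multiplication by $x^m$ is continuous and sends every $x^n$ to $x^{m+n} \in T$, so it sends the closure $T$ into $T$. Then, for any $y \in T$, the morphism of right multiplication by $y$ sends each $x^m$ into $T$ by the previous step, and hence sends $T$ into $T$. Thus $T \cdot T \subseteq T$. Commutativity follows similarly: the closed subset $\{(u,v) \in S \times S : uv = vu\}$ contains every pair $(x^m, x^n)$, and therefore contains the closure $T \times T$.

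Finally, I would check that $T$ is defined over $F$. Since $\{x^n : n \geq 1\} \subseteq S(F)$ is pointwise fixed by $\Gamma$, the closure $T$ is $\Gamma$-stable; by Galois descent (as invoked in the proof of Proposition~\ref{prop:com}(iii), cf.\ \cite[Chap.~11]{Springer}), the reduced $\Gamma$-stable closed subvariety $T$ of $S$ descends to an $F$-subvariety, and the restriction of $\mu$ gives it a compatible structure of commutative algebraic semigroup over $F$. Proposition~\ref{prop:com}(iii) then produces an $F$-rational smallest idempotent $e_0 \in T(F) \subseteq S(F)$, as required. The only delicate step is the descent argument for $T$; everything else is a routine extension of the continuity-and-closure techniques already used in Subsection~\ref{subsec:exi}.
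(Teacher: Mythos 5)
Your proof is correct and follows essentially the same route as the paper: both take the Zariski closure of the set of powers $x^n$ (the smallest closed subsemigroup $\langle x\rangle$ containing $x$), observe that it is a commutative algebraic semigroup defined over $F$, and apply Proposition~\ref{prop:com}\,(iii). One small refinement: over a non-perfect field, $\Gamma$-stability of a closed subvariety does not by itself give descent to $F$; the cleaner justification here is that the closure of a subset of $S(F)$ is automatically an $F$-subvariety (see \cite[Chap.~11]{Springer}).
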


\begin{proof}
Let $x \in S(F)$ and denote by $\langle x \rangle$ the smallest 
closed subsemigroup of $S$ containing $x$. Then $\langle x \rangle$ 
is the closure of the set of powers $x^n$, where $n \geq 1$. 
Thus, $\langle x \rangle$ is a commutative algebraic semigroup, 
defined over $F$. So $\langle x \rangle$ contains an idempotent 
defined over $F$, by the previous proposition.
\end{proof}

\subsection{Local structure}
\label{subsec:loc}

In this subsection, we fix an algebraic semigroup $S$
and an idempotent $e \in S$. Then $e$ defines two endomorphisms of 
the variety $S$: the left multiplication, $e_{\ell}: x \mapsto e x$, 
and the right multiplication, $e_r : x \mapsto xe$. Clearly, 
these endomorphisms are commuting idempotents, i.e., they satisfy 
$e_{\ell}^2 = e_{\ell}$, $e_r^2 = e_r$, and $e_{\ell} e_r = e_r e_{\ell}$. 
Since $e_{\ell}$ and $e_r$ fix the point $e$, their tangent maps at 
that point are commuting idempotent endomorphisms, $f_{\ell}$ and $f_r$, 
of the Zariski tangent space $T_e(S)$. Thus, we have a decomposition
into joint eigenspaces
\begin{equation}\label{eqn:decS}
T_e(S) = T_e(S)_{0,0} \oplus T_e(S)_{1,0} 
\oplus T_e(S)_{0,1} \oplus T_e(S)_{1,1},
\end{equation}
where we set
\[ T_e(S)_{a,b} := 
\{ z \in T_e(S) ~\vert~ f_{\ell}(z) = a z, f_r(z) = b z \} \]
for $a,b = 0,1$. The Zariski tangent space of $E(S)$
at $e$ has a simple description in terms of these eigenspaces: 

\begin{lemma}\label{lem:tang}
With the above notation, we have
\begin{equation}\label{eqn:decE} 
T_e(E(S)) = T_e(S)_{1,0} \oplus T_e(S)_{0,1}. 
\end{equation}
Moreover, $T_e(E(S))$ is the image of $f_r - f_{\ell}$.
\end{lemma}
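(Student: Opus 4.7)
The plan is to compute $T_e(E(S))$ directly from the scheme-theoretic definition of $E(S)$ as the preimage of the diagonal under $\phi : S \to S\times S$, $x \mapsto (x^2,x)$, and then recognize the answer as the joint $(1,0)$ and $(0,1)$ eigenspace of the pair $(f_\ell,f_r)$.

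First I would nail down the tangent map of the multiplication $\mu : S\times S \to S$ at the point $(e,e)$. Identifying $T_{(e,e)}(S\times S) = T_e(S) \oplus T_e(S)$, and using that the restriction of $\mu$ to $\{e\}\times S$ (respectively $S\times\{e\}$) is $e_\ell$ (respectively $e_r$), I obtain
\[ d\mu_{(e,e)}(z_1,z_2) \;=\; f_r(z_1) + f_\ell(z_2). \]
The map $\phi$ factors as $x \mapsto (x,x) \mapsto (\mu(x,x),x)$, so $d\phi_e(z) = (f_\ell(z)+f_r(z),\,z)$. Since $E(S)$ is the scheme-theoretic preimage of the diagonal $\Delta_S \subset S\times S$ (which is cut out by $y_1-y_2=0$), the tangent space $T_e(E(S))$ is the kernel of
\[ z \;\longmapsto\; (f_\ell(z)+f_r(z)) - z \;=\; (f_\ell+f_r-\id)(z). \]

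Next I would use the joint eigenspace decomposition (\ref{eqn:decS}): on $T_e(S)_{a,b}$ the operator $f_\ell + f_r - \id$ acts as the scalar $a+b-1$. This is nonzero on $T_e(S)_{0,0}$ and $T_e(S)_{1,1}$, and zero on $T_e(S)_{1,0}$ and $T_e(S)_{0,1}$. Hence the kernel is exactly $T_e(S)_{1,0}\oplus T_e(S)_{0,1}$, which gives (\ref{eqn:decE}).

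For the second assertion, I would observe that on the same decomposition, $f_r - f_\ell$ acts as the scalar $b-a$, which is zero on the $(0,0)$ and $(1,1)$ summands and a nonzero scalar (namely $-1$ or $+1$) on the $(1,0)$ and $(0,1)$ summands. Thus the image of $f_r - f_\ell$ is precisely $T_e(S)_{1,0}\oplus T_e(S)_{0,1}$, matching $T_e(E(S))$.

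The only real subtlety is making sure the scheme-theoretic tangent space is computed correctly; once the decomposition $d\phi_e(z) = (f_\ell(z)+f_r(z),z)$ is recorded and $\Delta_S$ is given its obvious defining equations, the rest is just linear algebra on commuting idempotents, so I do not expect any serious obstacle.
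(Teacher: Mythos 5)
Your proposal is correct and follows essentially the same route as the paper: both compute the tangent map of the square map at $e$ as $f_\ell+f_r$ via the factorization through the diagonal and the multiplication, then read off the kernel of $f_\ell+f_r-\id$ and the image of $f_r-f_\ell$ on the joint eigenspace decomposition (\ref{eqn:decS}). Your explicit formula $d\mu_{(e,e)}(z_1,z_2)=f_r(z_1)+f_\ell(z_2)$ is just a more carefully stated version of the paper's ``$T_{(e,e)}(\mu)=f_\ell\times f_r$''.
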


\begin{proof}
We claim that 
\[ T_e(E(S)) = \{ z \in T_e(S) ~\vert~ f_{\ell}(z) + f_r(z) = z \}. \]
Indeed, recall that $E(S)$ is the preimage of the diagonal 
under the morphism $S \to S \times S$, $x \mapsto (\sq(x),x)$,
where $\sq :S \to S$, $x \mapsto x^2$ denotes the square map. 
Thus, we have
\[ T_e(E(S)) = \{ z \in T_e(S) ~\vert~ T_e(\sq)(z) = z \}, \]
where $T_e(\sq)$ denotes the tangent map of $\sq$ at $e$.
Also, $\sq$ is the composition of the diagonal morphism,
$\delta : S \to S \times S$, followed by the multiplication, 
$\mu : S \times S \to S$. Thus, we have
\[ T_e(\sq) = T_{(e,e)}(\mu) \circ T_e(\delta) \]
with an obvious notation. Furthermore,  
$T_e(\delta) : T_e(S) \to T_e(S) \times T_e(S)$ 
is the diagonal embedding; also, 
$T_{(e,e)}(\mu) : T_e(S) \times T_e(S) \to T_e(S)$ equals 
$f_{\ell} \times f_r$, since the restriction of $\mu$ 
to $\{ e \} \times S$ (resp. $S \times \{ e \}$) is just 
$e_{\ell}$ (resp. $e_r$). Thus, $T_e(\sq)  = f_{\ell} + f_r$; 
this proves the claim.

Now (\ref{eqn:decE}) follows readily from the claim together with
the decomposition (\ref{eqn:decS}). For the second assertion, 
let $z \in T_e(S)$ and write $z = z_{0,0} + z_{1,0} + z_{0,1} + z_{1,1}$
in that decomposition. Then $(f_r - f_{\ell})(z) = z_{0,1} - z_{1,0}$
and hence $\Ima(f_r - f_{\ell}) = T_e(S)_{1,0} \oplus T_e(S)_{0,1}$. 
\end{proof}

Next, we observe that each joint eigenspace of $f_{\ell}$ and $f_r$ 
in $T_e(S)$ is the Zariski tangent space to a naturally defined 
closed subsemigroup scheme of $S$. Consider indeed 
the closed subscheme
\[ {_e{}S_e} := \{ x \in S ~\vert~ e x = x e = e \}, \]
where the right-hand side is understood as the scheme-theoretic 
fiber at $e$ of the morphism $e_{\ell} \times e_r : S \to S \times S$.
Define similarly
\[ eS_e := \{ x \in S ~\vert~ e x = x, \; x e = e \}, \quad
{_e{}S e} := \{ x \in S ~\vert~ e x = e, \; x e = x \}, \]
and finally
\[ eSe := \{ x \in S ~\vert~ e x = x e = x \}. \]
Then one readily obtains:

\begin{lemma}\label{lem:sub}
With the above notation, $_e{}S_e$, $eS_e$, $_e{}S e$, and $eSe$ 
are closed subsemigroup schemes of $S$ containing $e$. Moreover,
we have 
\[ T_e({_e{}S_e}) = T_e(S)_{0,0}, \quad T_e(eS_e) = T_e(S)_{1,0},
\quad T_e({_e{}Se}) = T_e(S)_{0,1}, \quad T_e(eSe) = T_e(S)_{1,1}. \]
\end{lemma}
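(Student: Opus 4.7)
The plan is to handle all four subschemes in parallel. Each is defined as a closed subscheme by the vanishing of a morphism built from $e_\ell$ and $e_r$: for instance, ${_e{}S_e}$ is the scheme-theoretic fiber at $(e,e)$ of $e_\ell \times e_r : S \to S \times S$, while $eS_e$ is the intersection of the equalizer of $e_\ell$ and $\id_S$ with the fiber of $e_r$ over $e$, and similarly for ${_e{}Se}$ and $eSe$. Since $e$ is idempotent, both $e_\ell$ and $e_r$ fix it, so $e$ lies in each of the four subschemes.

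To see that each is closed under multiplication, I would work pointwise (equivalently, on functors of points) using associativity. For example, if $x,y \in {_e{}S_e}$, then $e(xy) = (ex)y = ey = e$ and $(xy)e = x(ye) = xe = e$; the verifications for $eSe$, $eS_e$, and ${_e{}Se}$ are entirely analogous manipulations of their defining equations.

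For the tangent space computations, I would invoke two standard facts: the tangent space at $x$ to the scheme-theoretic fiber of a morphism $\varphi$ at $\varphi(x)$ is $\Kern(d\varphi_x)$, and the tangent space at $x$ to the equalizer of two morphisms $\varphi, \psi$ agreeing at $x$ is $\Kern(d\varphi_x - d\psi_x)$. Since the tangent maps at $e$ of $e_\ell$ and $e_r$ are $f_\ell$ and $f_r$ respectively, these facts yield, for example,
\[ T_e({_e{}S_e}) = \Kern(f_\ell) \cap \Kern(f_r) = T_e(S)_{0,0} \]
and
\[ T_e(eS_e) = \Kern(f_\ell - \id) \cap \Kern(f_r) = T_e(S)_{1,0}, \]
and similarly $T_e({_e{}Se}) = T_e(S)_{0,1}$ and $T_e(eSe) = T_e(S)_{1,1}$. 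There is no serious obstacle: the computation is formal once the decomposition (\ref{eqn:decS}) is in hand, and the only thing to keep track of is which of the two defining conditions in each case produces eigenvalue $1$ rather than $0$ for $f_\ell$ and for $f_r$.
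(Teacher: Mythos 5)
Your proposal is correct and is precisely the routine verification that the paper leaves to the reader (the lemma is introduced with ``one readily obtains''): closure under multiplication via associativity on $T$-valued points, and identification of the tangent spaces as kernels of $f_\ell$, $f_r$, $f_\ell - \id$, $f_r - \id$ using the standard descriptions of tangent spaces to scheme-theoretic fibers and equalizers. The eigenvalue bookkeeping ($ex = x$ giving eigenvalue $1$ for $f_\ell$, $ex = e$ giving eigenvalue $0$, and dually for $f_r$) matches the paper's conventions for $T_e(S)_{a,b}$.
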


\begin{remarks}\label{rem:sub}
(i) Note that $_e{}S_e$ is the largest closed subsemigroup scheme 
of $S$ containing $e$ as its zero. This subscheme is not 
necessarily reduced, as shown e.g. by \cite[Ex.~3]{Brion12}. 
Specifically, consider the affine space $\bA^3$ equipped with 
pointwise multiplication; this is a toric monoid. 
Let $M$ be the hypersurface of $\bA^3$ with equation 
\[ z^n - x y^n =0, \] 
where $n$ is a positive integer. Then $M$ is a closed toric 
submonoid, containing $e:= (1,0,0)$ as an idempotent. Moreover, 
$_e{}M_e$ is the closed subscheme of $\bA^3$ with ideal generated
by $x-1$ and $z^n -y^n$. Thus, $_e{}M_e $ is everywhere nonreduced 
whenever $n$ is a multiple of the characteristic of $k$ 
(assumed to be nonzero).

\smallskip

\noindent
(ii) Also, note that $eS_e$ is the largest closed subsemigroup 
scheme of $S$ containing $e$ and such that the composition law
is the second projection. (Indeed, every such subsemigroup scheme 
$S'$ satisfies $e x = x$ and $x e = e$ for any $T$-valued point $x$ 
of $S'$, where $T$ is an arbitrary scheme; in other words,
$S' \subset eS_e$. Conversely, for any $T$-valued points $x,y$ 
of $eS_e$, we have $x y = x e y = e y = y$). In particular, $eS_e$ 
consists of idempotents, and $eS_e = x S_x$ for any $k$-rational 
point $x$ of $eS_e$.

Likewise, $_e{}Se$ is the largest closed subsemigroup 
scheme of $S$ containing $e$ and such that the composition 
law is the first projection. We shall see in Corollary 
\ref{cor:red} that $_e{}Se$ and $eS_e$ are in fact reduced.

\smallskip

\noindent
(iii) Finally, $eSe$ is the largest closed submonoid scheme 
of $S$ with neutral element $e$. This subscheme is reduced, 
since it is the image of the morphism 
$S \to S$, $x \mapsto e x e$. Likewise,
$Se$ and $eS$ are closed subsemigroups of $S$, and
\[ T_e(Se) = T_e(S)_{0,1} \oplus T_e(S)_{1,1}, \quad
T_e(eS) = T_e(S)_{1,0} \oplus T_e(S)_{1,1}. \]
\end{remarks}

One would like to have a `global' analogue of the decomposition
(\ref{eqn:decS}) along the lines of the local structure results
for algebraic monoids obtained in \cite{Brion08} (which makes
an essential use of the unit group). Specifically, 
one would like to describe some open neighborhood
of $e$ in $S$ by means of the product of the four pieces
$_e{}S_e$, $_e{}Se$, $eS_e$, and $eSe$ (taken in a suitable order)
and of the composition law of $S$. But this already fails when $S$
is commutative: then both $_e{}Se$ and $eS_e$ consist of the 
reduced point $e$, so that we only have two nontrivial pieces,
$S_e$ and $eS$; moreover, the restriction of the composition law 
to $S_e \times eS \to S$ is just the second projection, since 
$x y = x e y = e y = y$ for all $x \in S_e$ and $y \in eS$. 
Yet we shall obtain global analogues of certain partial sums in 
the decomposition (\ref{eqn:decS}). For this, we introduce 
some notation. 

Consider the algebraic monoid $eSe$ and its unit group, $G(eSe)$.
Since $G(eSe)$ is open in $eSe$, the set
\begin{equation}\label{eqn:U} 
U = U(e) := \{ x \in S ~\vert~ e x e \in G(eSe) \} 
\end{equation}
is open in $S$. Clearly, $U$ contains $e$ and is stable under
$e_{\ell}$ and $e_r$; also, note that 
\[ Ue \cap eU = e U e = G(eSe). \]
We now describe the structure of $Ue$:

\begin{lemma}\label{lem:Ue} 
Keep the above notation.

\noindent
{\rm (i)} $U e = \{ x \in S e ~\vert~ e x \in G(eSe) \}$
and $_e{}U e = {_e{}Se}$.

\noindent
{\rm (ii)} $Ue$ is an open subsemigroup of $Se$.

\noindent 
{\rm (iii)} The morphism 
\[ \varphi : {_e{}S e} \times G(eSe) \longrightarrow S, 
\quad (x,g) \longmapsto x g \]
is a locally closed immersion with image $Ue$. Moreover, 
$\varphi$ is an isomorphism of semigroup schemes, where 
the composition law of the left-hand side is given by
$(x,g)(y,h) := (x,gh)$.

\noindent
{\rm (iv)} The tangent map of $\varphi$ at $(e,e)$ 
induces an isomorphism
\[ T_e(S)_{0,1} \oplus T_e(S)_{1,1} \cong T_e(Ue) = T_e(Se). \]
\end{lemma}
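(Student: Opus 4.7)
The proof proceeds in the order (i)--(iv), with part (iii) as the structural heart: the morphism $\varphi$ encodes the decomposition $y = y(ey)^{-1} \cdot ey$ of any $y \in Ue$, where $ey \in G(eSe)$ is the ``unit part'' and $y(ey)^{-1} \in {_e{}Se}$ is the ``retract to $e$''. The natural candidate for the inverse is $\psi : y \mapsto (y(ey)^{-1}, ey)$, which is manifestly regular precisely on $Ue$; parts (i) and (ii) exist to make this decomposition available and to identify the correct source and target.

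For (i), the inclusion $Ue \subseteq \{x \in Se \mid ex \in G(eSe)\}$ is direct from the definitions: $xe \in Se$ and $e(xe) = exe \in G(eSe)$. For the reverse inclusion, I would use that for $y \in Se$ the identity $ye = y$ gives $eye = ey$, so $ey \in G(eSe)$ forces $y \in U$, and then $y = ye \in Ue$. For the identity $_e{}Ue = {_e{}Se}$ it suffices to note that every $x \in {_e{}Se}$ satisfies $exe = e \in G(eSe)$, hence lies in $U$.

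Part (ii) splits into two claims. Openness of $Ue$ in $Se$ is immediate from the characterization in (i): $Ue$ is the preimage of the open subset $G(eSe) \subseteq eSe$ under the morphism $Se \to eSe$, $y \mapsto ey$. The subsemigroup property requires checking that for $y_1, y_2 \in Ue$ one has $ey_1 y_2 \in G(eSe)$; I would exploit $ey_1 = ey_1 e$ to rewrite this as the product $(ey_1)(ey_2)$ in $eSe$, which is a product of units, hence a unit.

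For (iii), I will verify that $\varphi$ is a semigroup morphism (which reduces to the identity $gy = g$ for $g \in G(eSe)$, $y \in {_e{}Se}$, coming from $g = ge$ and $ey = e$), then identify its image with $Ue$ (forward by a one-line calculation of $e(xg) = g$; backward by producing the decomposition displayed above), and finally construct $\psi$ as above and check $\psi \circ \varphi = \id$ and $\varphi \circ \psi = \id$. Since inversion on $G(eSe)$ is regular, $\psi$ is a morphism, so $\varphi$ is an isomorphism onto $Ue$; as $Ue$ is open in the closed subvariety $Se \subseteq S$, this exhibits $\varphi$ as a locally closed immersion into $S$. Part (iv) is then formal: the isomorphism $\varphi$ induces an isomorphism on Zariski tangent spaces at $(e,e)$, and $T_{(e,e)}({_e{}Se} \times G(eSe))$ splits as $T_e({_e{}Se}) \oplus T_e(G(eSe))$, which equals $T_e(S)_{0,1} \oplus T_e(S)_{1,1}$ by Lemma \ref{lem:sub} together with the fact that $G(eSe)$ is open in $eSe$; the equality $T_e(Ue) = T_e(Se)$ uses the openness established in (ii).

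The main obstacle will be purely notational: keeping track of the zoo of identities $ex = x$, $xe = x$, $ex = e$, $xe = e$, $eye = ey$ in the right combinations, and being careful about the order of establishing statements so that $\psi$ is only applied after $Ue$ has been shown to lie in the locus where $ey$ is invertible in $eSe$. The scheme-theoretic content itself is transparent once the decomposition $y = y(ey)^{-1} \cdot ey$ is in hand.
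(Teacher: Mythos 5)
Your proposal is correct and follows essentially the same route as the paper: the same characterization of $Ue$ from (i), the same computation $exy = exey$ for the subsemigroup property, and the same inverse $z \mapsto (z(ez)^{-1}, ez)$ built from the decomposition $z = z(ez)^{-1}\cdot ez$. The one refinement the paper makes explicit, and which you should too, is that the identities in (iii) are verified on $T$-valued points for an arbitrary scheme $T$ rather than on $k$-points only, since ${_e{}Se}$ is not yet known to be reduced at this stage (that is deduced afterwards, in Corollary \ref{cor:red}); your identities are all functorial, so this is a matter of phrasing rather than substance.
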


\begin{proof}
Both assertions of (i) are readily checked. The first assertion
implies that $Ue$ is open in $Se$. To show that $Ue$ is a
subsemigroup, note that for any points $x,y$ of $Ue$, we have
$e x y  = e x e y$. Hence $e x y \in G(eSe)$ by (i), so that 
$xy \in Ue$ by (i) again. This completes the proof of (ii).

For (iii), consider $T$-valued points $x$ of $_e{}Se$ and $g$ 
of $G(eSe)$, where $T$ is an arbitrary scheme. 
Then $exge = ege = g$ and hence $xg$ is a $T$-valued point 
of $Ue$. Thus, $\varphi$ yields a morphism
$_e{}Se \times G(eSe) \to Ue$. Moreover, we have
\[ \varphi(x,g) \varphi(y,h) = x g y h = x g e y h 
= x g e h = x g h = \varphi((x,g)(y,h)), \]
that is, $\varphi$ is a homomorphism of semigroup schemes.
To show that $\varphi$ is an isomorphism, consider a $T$-valued
point $z$ of $Ue$ and denote by $(ez)^{-1}$ the inverse of $ez$ 
in $G(eSe)$. Then $z = x g$, where $x := z (ez)^{-1}$ 
and $g := ez$; moreover, $x \in (_e{}S e)(T)$ and 
$g \in (eSe)(T)$. Also, if $z = y h$ where $y \in (_e{}S e)(T)$ 
and $h \in (eSe)(T)$, then $h = e h = e y h = ez$ and 
$y = y e = y h h^{-1} = z (ez)^{-1}$. Thus, the morphism 
\[ Ue \longrightarrow {_e{}Se} \times G(eSe), \quad  
z \longmapsto (z(ez)^{-1}, ez) \] 
is the inverse of $\varphi$.

Finally, (iv) follows readily from (iii) in view of Lemma
\ref{lem:sub}. 
\end{proof}

\begin{corollary}\label{cor:red}
{\rm (i)} The scheme $_e{}Se$ is reduced, and is a union of connected 
components of $E(Se)$. 

\noindent
{\rm (ii)} If $S$ is irreducible, then $_e{}Se$ is the unique irreducible
component of $E(Se)$ through $e$.
\end{corollary}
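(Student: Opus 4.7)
The plan is to leverage the semigroup-scheme isomorphism
\[ \varphi : {_e{}Se} \times G(eSe) \longrightarrow Ue \]
from Lemma \ref{lem:Ue}(iii), which trivializes an open neighborhood of $e$ in $S$ as a product of $_e{}Se$ with an algebraic group. I will combine this trivialization with two easy observations: $_e{}Se$ is closed in $S$ by its defining equations, and the composition law on $_e{}Se$ is the first projection (Remark \ref{rem:sub}(ii)), so every element of $_e{}Se$ is idempotent and $_e{}Se \subseteq E(Se)$.

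For reducedness in (i): $Ue$ is open in the variety $S$, hence reduced, so the product ${_e{}Se} \times G(eSe)$ is reduced. Since $G(eSe)$ is a nonempty variety and we work over a field, a nonzero nilpotent in an affine chart of $_e{}Se$ would produce a nonzero nilpotent (via $f \mapsto f \otimes 1$) in the corresponding chart of the product, contradicting reducedness. Thus $_e{}Se$ is reduced.

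To establish the remaining assertions, I will identify $E(Ue)$ scheme-theoretically through $\varphi$. The composition law on the product is $(x,g)(y,h) = (x,gh)$, so the square map is $(x,g) \mapsto (x,g^2)$, and scheme-theoretically
\[ E({_e{}Se} \times G(eSe)) = {_e{}Se} \times E(G(eSe)). \]
The factor $E(G(eSe))$ is the reduced point $\{e\}$: idempotents in a group are identities, and the tangent-space computation from the proof of Proposition \ref{prop:com}(i) yields $T_e E(G(eSe)) = \{z : 2z = z\} = 0$. Using $\varphi(x,e) = xe = x$ on $_e{}Se$, the image of ${_e{}Se} \times \{e\}$ is precisely $_e{}Se$, so $E(Ue) = {_e{}Se}$ as subschemes of $Ue$. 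Thus $_e{}Se = E(Se) \cap Ue$ is open in $E(Se)$; being also closed, it is a union of connected components, finishing (i). For (ii), if $S$ is irreducible then so is $Se$ (as the continuous image of $S$ under $x \mapsto xe$), hence also the nonempty open $Ue$, and hence the product ${_e{}Se} \times G(eSe)$; over an algebraically closed field this forces both factors to be irreducible, so $_e{}Se$ is an irreducible clopen subset of $E(Se)$ through $e$, i.e., the unique such irreducible component.

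The main delicate point is the scheme-theoretic (not merely set-theoretic) identification $E(Ue) = {_e{}Se}$, which rests on verifying that $E(G(eSe))$ is a reduced point; once this is in hand, both parts follow by routine manipulation of the trivialization $\varphi$.
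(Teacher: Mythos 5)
Your proof is correct and follows essentially the same route as the paper: both deduce reducedness and the scheme-theoretic equality ${_e{}Se} = E(Se) \cap Ue$ from the trivialization $\varphi$ of Lemma \ref{lem:Ue}, then conclude (i) from clopenness and (ii) from irreducibility of $Ue$. You merely make explicit the step the paper leaves implicit, namely that $E({_e{}Se} \times G(eSe)) = {_e{}Se} \times E(G(eSe))$ with $E(G(eSe))$ a reduced point.
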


\begin{proof}
Since $Ue$ is reduced, and isomorphic to $_e{}Se \times G(eSe)$
in view of Lemma \ref{lem:Ue}, 
we see that $_e{}Se$ is reduced as well. Moreover, that lemma
also implies that $_e{}Se = E(Se) \cap Ue$ (as schemes).
In particular, $_e{}Se$ is open in $E(Se)$. But $_e{}Se$ is also
closed; this proves (i).

Next, assume that $S$ is irreducible; then so are $Se$ and $Ue$.
By Lemma \ref{lem:Ue} again, $_e{}Se$ is irreducible as well,
which implies (ii).
\end{proof}

Note that a dual version of Lemma \ref{lem:Ue} yields the structure 
of $eU$; also, $eS_e$ satisfies the dual statement of Corollary 
\ref{cor:red}.

We now obtain a description of the isolated points of $E(S)$ 
(viewed as a topological space). To state our result, denote by 
$C = C(e)$ the union of those irreducible components of $S$ 
that contain $e$, or alternatively, the closure of any neighborhood 
of $e$ in $S$; then $C$ is a closed subsemigroup of $S$.

\begin{proposition}\label{prop:cent}
With the above notation, $e$ is isolated in $E(S)$ if and only 
if $e$ centralizes $C$; then $E(S)$ is reduced at $e$. 

In particular, the isolated idempotents of an irreducible 
algebraic semigroup are exactly the central idempotents.
\end{proposition}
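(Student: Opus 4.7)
The plan is to translate both directions of the equivalence into the joint eigenspace decomposition of $T_e(S)$ given by Lemma~\ref{lem:tang}, and then to use the local product structure of Lemma~\ref{lem:Ue} to pass between infinitesimal information at $e$ and global information on $C$.

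Suppose first that $e$ centralizes $C$, so that $e_\ell=e_r$ on $C$. Since $C$ contains an open neighbourhood of $e$ in $S$ (obtained by removing the finitely many irreducible components of $S$ that miss $e$), we have $T_e(C)=T_e(S)$, and hence $f_\ell=f_r$ on the whole tangent space. The subspaces $T_e(S)_{1,0}$ and $T_e(S)_{0,1}$ require incompatible eigenvalues for $f_\ell$ and $f_r$, so both vanish; Lemma~\ref{lem:tang} then gives $T_e(E(S))=0$, and Nakayama forces $\cO_{E(S),e}=k$, making $e$ an isolated reduced point of $E(S)$.

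For the converse, assume $e$ is isolated in $E(S)$. A brief associativity calculation shows $x^2=(xe)x=x(ex)=xe=x$ whenever $xe=x$ and $ex=e$, so ${_e{}Se}\subset E(S)$ set-theoretically; dually for $eS_e$. Both subvarieties are reduced by Corollary~\ref{cor:red}, so isolation of $e$ in $E(S)$ forces $\{e\}$ to be clopen in each, and reducedness then gives $T_e({_e{}Se})=T_e(eS_e)=0$, which by Lemma~\ref{lem:sub} amounts to $T_e(S)_{0,1}=T_e(S)_{1,0}=0$.

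The decisive step is to upgrade this infinitesimal vanishing to the global identity $e_\ell=e_r$ on $C$. Since ${_e{}Se}$ coincides with the reduced point $\{e\}$ near $e$, the isomorphism $\varphi$ of Lemma~\ref{lem:Ue}(iii) identifies an open neighbourhood of $e$ in $Ue$ with $G(eSe)$. By continuity of $z\mapsto ze$, there is an open neighbourhood $W_1$ of $e$ in $S$ with $ze\in G(eSe)\subset eSe$ for all $z\in W_1$, and this yields $eze=ze$. The dual of Lemma~\ref{lem:Ue}, applied to $eS_e$, produces an open neighbourhood $W_2$ on which $eze=ez$. Shrinking $V:=W_1\cap W_2$ to sit inside $C$, we have $ez=ze$ on $V$; since $V$ is dense in $C$ while the centralizer of $e$ in $S$ is the closed preimage of the diagonal under $z\mapsto(ez,ze)$, we conclude that $e$ centralizes $C$. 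The last assertion is immediate since $C=S$ when $S$ is irreducible. The main obstacle is precisely this last step: the bare vanishing of the two tangent eigenspaces gives no direct control over $e_\ell$ and $e_r$ away from $e$, and it is the product decomposition $Ue\cong{_e{}Se}\times G(eSe)$ that allows us to conclude $ze\in eSe$ on a full neighbourhood of $e$.
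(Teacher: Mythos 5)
Your proof is correct, and while its overall skeleton matches the paper's (both directions are routed through the eigenspace decomposition of Lemma~\ref{lem:tang}, and the converse begins by extracting $T_e(S)_{0,1}=T_e(S)_{1,0}=0$ from the reducedness of ${_e{}Se}$ and $eS_e$), you handle the decisive globalization step of the converse by a genuinely different argument. The paper works entirely in the local ring: from $f_{\ell}=f_r$ on $\fm/\fm^2$ it deduces that $e_{\ell}$ and $e_r$ agree on each $\fm^n/\fm^{n+1}$ via the surjection $\Sym^n(\fm/\fm^2)\to\fm^n/\fm^{n+1}$, then on $\cO_{S,e}/\fm^n$ for all $n$ (two commuting idempotents preserving a filtration and agreeing on the associated graded are equal), then on $\cO_{S,e}$ by Krull intersection, and finally spreads this out to a neighbourhood. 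You instead exploit the product structure of Lemma~\ref{lem:Ue}: once $\{e\}$ is open in ${_e{}Se}$, the immersion $\varphi$ makes $G(eSe)$ open in $Ue$ and hence in $Se$, so $ze\in eSe$ and $eze=ze$ for all $z$ in an open neighbourhood of $e$; the dual decomposition of $eU$ gives $eze=ez$, and density of that neighbourhood in $C$ plus closedness of the centralizer finishes. Your route is more geometric and avoids the formal local-ring computation entirely, at the price of invoking the full decompositions of $Ue$ and $eU$ (already available, so nothing is lost); the paper's route isolates the slightly more general fact that $f_{\ell}=f_r$ on $T_e(S)$ alone forces $e_{\ell}=e_r$ near $e$. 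Both arguments are complete, and your forward direction and the reducedness assertion are handled exactly as in the paper.
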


\begin{proof}
Assume that $e$ centralizes $C$; then $e_{\ell}$ and $e_r$ induce 
the same endomorphism of the local ring $\cO_{C,e} = \cO_{S,e}$. 
Thus, $f_{\ell} = f_r$. By Lemma \ref{lem:tang}, it follows that 
$T_e(E(S)) = \{ 0 \}$. Hence $e$ is an isolated reduced point 
of $E(S)$.

Conversely, if $e$ is isolated in $E(S)$, then it is also
isolated in $_e{}Se$ and in $eS_e$ (since they both consist 
of idempotents). As  $_e{}Se$ and $eS_e$ are reduced, it
follows that $T_e(_e{}Se) = \{ 0 \} =T_e(eS_e)$, i.e.,
$T_e(S)_{0,1} = T_e(S)_{1,0} = \{ 0 \}$. In view of Lemma 
\ref{lem:tang}, we thus have $T_e(E(S))= \{ 0 \}$, i.e., 
$E(S)$ is reduced at $e$. Moreover, $f_{\ell} = f_r$ by 
Lemma \ref{lem:tang} again. 
In other words, $e_{\ell}$ and $e_r$ induce the same endomorphism 
of $\fm/\fm^2$, where $\fm$ denotes the maximal ideal of $\cO_{S,e}$. 
Hence $e_{\ell}$ and $e_r$ induce the same endomorphism of
$\fm^n/\fm^{n+1}$ for any integer $n \geq 1$, since the natural map
$\Sym^n(\fm/\fm^2) \to \fm^n/\fm^{n+1}$ is surjective and equivariant 
for the natural actions of $e_{\ell}$ and $e_r$. Next, consider the 
endomorphisms of $\cO_{S,e}/\fm^n$ induced by $e_r$ and $e_{\ell}$: 
these are commuting idempotents of this finite-dimensional $k$-vector 
space, which preserve the filtration by the subspaces
$\fm^m/\fm^n$ ($0 \leq m \leq n$) and coincide on the associated 
graded vector space. Thus, $e_{\ell} = e_r$ as endomorphisms of 
$\cO_{S,e}/\fm^n$ for all $n$, and hence as endomorphisms of $\cO_{S,e}$. 
This means that for any $f \in \cO_{S,e}$ there exists a neighborhood 
$V = V_f$ of $e$ in $S$ such that $f(ex) = f(xe)$ for all $x \in V$. 
Since $\cO_{S,e}$ is the localization of a finitely generated 
$k$-algebra, it follows that we may choose $V$ independently of $f$. 
Then $xe = ex$ for all $x \in V$, and hence for all $x \in C$,
since $V$ is dense in $C$.
\end{proof}

We now return to the decomposition (\ref{eqn:decS}), and obtain
a global analogue of the partial sum 
$T_e(S)_{1,0} \oplus T_e(S)_{0,1} \oplus T_e(S)_{1,1}$
in terms of the open subset $U$:

\begin{lemma}\label{lem:UeU}
{\rm (i)} The morphism
\[ \psi : {_e{}S e} \times G(eSe) \times eS_e \longrightarrow S, 
\quad (x,g,y) \longmapsto x g y \]
is a locally closed immersion with image $U e U$.

\noindent
{\rm (ii)} The tangent map of $\psi$ at $(e,e,e)$ induces 
an isomorphism
\[ T_e(S)_{0,1} \oplus T_e(S)_{1,1} \oplus T_e(S)_{1,0} 
\cong T_e(U e U). \]
\end{lemma}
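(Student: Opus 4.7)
The plan is to imitate the proof of Lemma \ref{lem:Ue} by constructing an explicit retraction of $\psi$ on the open neighborhood $U$ of $e$ in $S$. Concretely, I would define a morphism $\Phi : U \to {_e{}Se} \times G(eSe) \times eS_e$ by
\[ \Phi(z) := \bigl( ze \cdot (eze)^{-1}, \; eze, \; (eze)^{-1} \cdot ez \bigr), \]
which makes sense since $eze \in G(eSe)$ for every $z \in U$. Routine verifications, using $eg^{-1} = g^{-1}e = g^{-1}$ for any $g \in G(eSe)$, show that the first component of $\Phi(z)$ lies in ${_e{}Se}$ and the third in $eS_e$.

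Next, I would check that $\Phi \circ \psi = \id$ on the product. For $(x,g,y)$ in the source, the relations $ex = e$, $ye = e$, and $eg = ge = g$ yield $e(xgy)e = g$, $(xgy)e = xg$, and $e(xgy) = gy$, from which $\Phi(\psi(x,g,y)) = (x,g,y)$ is immediate. Thus $\psi$ factors through $U$ and is a section of the separated morphism $\Phi$, hence a closed immersion into $U$, and so a locally closed immersion into $S$.

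It then remains to identify the set-theoretic image of $\psi$ with $UeU$. For one inclusion, $\psi(x,g,y) = xgy = x \cdot e \cdot (gy)$ with $x \in U$ (since $exe = e \in G(eSe)$) and $gy \in U$ (since $e(gy)e = g \in G(eSe)$), so the image lies in $UeU$. For the converse, given $z = ueu' \in UeU$, Lemma \ref{lem:Ue} and its dual provide factorizations $ue = x g_1$ with $(x, g_1) \in {_e{}Se} \times G(eSe)$ and $eu' = g_2 y$ with $(g_2, y) \in G(eSe) \times eS_e$, so that $z = (ue)(eu') = x(g_1 g_2) y = \psi(x, g_1 g_2, y)$. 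Part (ii) is then immediate: as $\psi$ is a locally closed immersion, its tangent map at $(e,e,e)$ is an isomorphism onto $T_e(UeU)$, and the source decomposes, via Lemma \ref{lem:sub} together with the openness of $G(eSe)$ in $eSe$, as $T_e(S)_{0,1} \oplus T_e(S)_{1,1} \oplus T_e(S)_{1,0}$.

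The main obstacle I anticipate is ensuring both that $\Phi$ extends to a well-defined morphism on all of $U$ and that the composition $\psi \circ \Phi$ singles out precisely $UeU$; both points reduce cleanly to Lemma \ref{lem:Ue} and its dual, which supply the required factorizations of $Ue$ and $eU$ through the unit group $G(eSe)$.
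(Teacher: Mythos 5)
Your proposal is correct and follows essentially the same route as the paper: the formulas $g = eze$, $x = ze(eze)^{-1}$, $y = (eze)^{-1}ez$ are exactly the paper's, which characterizes $UeU$ inside $U$ by the condition $z = ze(eze)^{-1}(eze)(eze)^{-1}ez$, i.e., as the fixed locus of your $\psi\circ\Phi$; packaging this as "$\psi$ is a section of the separated morphism $\Phi$, hence a closed immersion into $U$" is just a cleaner way of drawing the same conclusion. The identification of the image with $UeU$ via Lemma \ref{lem:Ue} and its dual, and the deduction of (ii) from Lemma \ref{lem:sub}, also match the paper.
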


\begin{proof}
(i) Let $z \in UeU$; then $z = z' z''$ with $z' \in Ue$ and 
$z'' \in eU$. In view of Lemma \ref{lem:Ue}, 
it follows that $z = x g y$ with $x \in {_e{}Se}$, $g \in G(eSe)$, 
and $y \in eS_e$. Then $z e = x g e = x g$; likewise,
$e z = e g y = g y$. Thus, we have $g = e z e$, 
$x = z e (e z e)^{-1}$, and $y = (e z e)^{-1} ez$. In particular, 
$z$ satisfies the following conditions: $z \in U$,
and $z = z e (e z e)^{-1} (eze) (e z e)^{-1} ez$. Conversely, 
if $z \in S$ satisfies the above two conditions, then 
$z \in  _e{}Se \, G(eSe) \, eS_e \subset (Ue) (eU) = UeU$. Also, 
these conditions clearly define a locally closed subset of $S$.
This yields the assertions.

(ii) follows from (i) in view of Lemma \ref{lem:sub}.
\end{proof}

Finally, we obtain a parameterization of those idempotents
of $S$ that are contained in $U e U$. To state it, let
\[ V = V(e) := 
\{ (x,y) \in {_e{}Se} \times eS_e ~\vert~ yx \in G(eSe)\}. \]
Then $V$ is an open neighborhood of $(e,e)$ in $_e{}Se \times eS_e$.
For any point $(x,y)$ of $V$, we denote by $(yx)^{-1}$ the inverse
of $yx$ in $G(eSe)$.

\begin{lemma}\label{lem:idemUeU}
With the above notation, the morphism
\[ \gamma : V \longrightarrow S, \quad 
(x,y) \longmapsto x (yx)^{-1} y \]
induces an isomorphism from $V$ to the scheme-theoretic 
intersection $UeU \cap E(S)$. Moreover, the tangent map of 
$\gamma$ at $(e,e)$ induces an isomorphism 
\[ T_{0,1}(S) \oplus T_{1,0}(S) \cong T_e(UeU \cap E(S)). \]
\end{lemma}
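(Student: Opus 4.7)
The plan is to identify $UeU \cap E(S)$ scheme-theoretically with a closed subscheme of $_e{}Se \times G(eSe) \times eS_e$ via the isomorphism $\psi$ of Lemma~\ref{lem:UeU}(i), and to show that this subscheme is the graph of the map $V \to G(eSe)$, $(x,y) \mapsto (yx)^{-1}$. Composing the inverse graph projection with $\psi$ will recover $\gamma$ and exhibit it as an isomorphism.

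First I check that $\gamma$ lands in $UeU \cap E(S)$. For a $T$-valued point $(x,y)$ of $V$, the defining relations of $_e{}Se$ and $eS_e$ (namely $ex = e$, $xe = x$, $ey = y$, $ye = e$), together with $(yx)^{-1} \in G(eSe)$, give $e\gamma(x,y)e = (yx)^{-1}$; hence $\gamma(x,y) \in U$ and $\gamma(x,y) = \psi(x,(yx)^{-1},y) \in UeU$. Idempotency of $\gamma(x,y)$ then follows from the cancellation $(yx)^{-1}(yx)(yx)^{-1} = (yx)^{-1}$ in $G(eSe)$.

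The key step is to identify the closed subscheme $\psi^{-1}(E(S) \cap UeU)$ of $_e{}Se \times G(eSe) \times eS_e$ with $\{(x,g,y) : yxg = e\}$. On $T$-valued points, the equivalence $(xgy)^2 = xgy \iff yxg = e$ follows from the factorization
\[ (xgy)^2 - xgy = xg(yxg - e)y, \]
which proves the backward direction, together with the reverse manipulation: from $(xgy)^2 = xgy$, left-multiplication by $e$ gives $gyxgy = gy$; left-multiplication by $g^{-1}$ gives $yxgy = y$; right-multiplication by $e$ gives $yxg = e$ (using $ge = g$ and $ye = e$). Being functorial in $T$, this equivalence identifies the two closed subschemes.

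The subscheme $\{(x,g,y) : yxg = e\}$ projects isomorphically onto $V$ by $(x,g,y) \mapsto (x,y)$: functorially, $yxg = e$ with $g \in G(eSe)$ forces $yx = g^{-1} \in G(eSe)$, so $(x,y) \in V$ and $g = (yx)^{-1}$; the inverse projection is the morphism $(x,y) \mapsto (x,(yx)^{-1},y)$. Composing with $\psi$ recovers $\gamma$, which is therefore an isomorphism $V \cong UeU \cap E(S)$. Since $V$ is open in $_e{}Se \times eS_e$, Lemma~\ref{lem:sub} yields $T_{(e,e)}V = T_e({_e{}Se}) \oplus T_e(eS_e) = T_e(S)_{0,1} \oplus T_e(S)_{1,0}$, and the tangent map of $\gamma$ at $(e,e)$ is the stated isomorphism. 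The main obstacle is the functorial verification in the previous paragraph: the manipulations used to pass from $(xgy)^2 = xgy$ to $yxg = e$ must be organized so as to be reversible on $T$-points, so that the two conditions really define the same closed subscheme, not merely agree on $k$-points.
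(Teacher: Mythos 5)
Your proof is correct and follows essentially the same route as the paper: both use the parameterization $z = xgy$ from Lemma~\ref{lem:UeU}, multiply the idempotency relation by $e$ on both sides to extract $gyxg = g$, hence $g = (yx)^{-1}$, and verify everything on $T$-valued points so that the identification is scheme-theoretic. The only cosmetic flaw is the formal expression $(xgy)^2 - xgy = xg(yxg-e)y$, which has no literal meaning in a semigroup; it should be replaced by the direct computation $(xgy)^2 = xg(yxg)y = xgey = xgy$ when $yxg = e$, which is exactly what your surrounding argument already supplies.
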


\begin{proof}
We argue with $T$-valued points for an arbitrary scheme $T$, as in 
the proof of Lemma \ref{lem:Ue} (iii). 

Let $z \in U e U$. By Lemma \ref{lem:UeU}, we may write $z$ uniquely
as $x g y$, where $x \in {_e{}Se}$, $g \in G(eSe)$, and $y \in eS_e$.
If $z \in E(S)$, then of course $x g y x g y = x g y$. Multiplying
by $e$ on the left and right, this yields $g y x g = g$ and hence
$g y x = e$. Thus, $(x,y) \in V$ and $z = \gamma(x,y)$. Conversely,
if $(x,y) \in V$, then $x (yx)^{-1} \in {_e{}Se}\, G(eSe)$ and
hence $x (yx)^{-1} \in Ue$ by Lemma \ref{lem:Ue}. Using that lemma
again, it follows that $\gamma(x,y) \in UeU$. Also, one readily 
checks that $\gamma(x,y)$ is idempotent. This shows the first 
assertion, which in turn implies the second assertion.
\end{proof}

\begin{remarks}\label{rem:green}
(i) The above subsets $Ue$, $eU$, $UeU$, and $eUe$ are contained
in the corresponding equivalence classes of $e$ under Green's 
relations (see e.g. \cite[Def.~1.1]{Putcha88} for the definition
of these relations). 

Indeed, for any $x \in Ue$, we have obviously 
$S^1 x \subset S^1 e = S e$, where $S^1$ denotes the monoid 
obtained from $S$ by adjoining a neutral element. Also, $S^1 x$ 
contains $(ex)^{-1} e x = e$. Thus, $S^1 x = S^1 e$, that is,
$x \mathscr{L} e$ with the notation of [loc.~cit.]. Likewise,
$x \mathscr{R} e$ for any $x \in eU$, and $x \mathscr{J} e$ 
for any $x \in UeU$. Finally, $e U e = G(eSe)$ equals the 
$\mathscr{H}$-equivalence class of $e$.

 Also, one readily checks that $eS_e$ (resp. $_e{}Se$) is the 
set of idempotents in the equivalence class of $e$ under
$\mathscr{R}$ (resp. $\mathscr{L}$).

\smallskip

\noindent
(ii) Consider the centralizer of $e$ in $S$,
\[ C_S(e) := \{ x \in S ~\vert~ x e = e x \}. \]
This is a closed subsemigroup scheme of $S$ containing both
$eSe$ and $_e{}S_e$. Moreover, the (left or right) multiplication
by $e$ yields a retraction of semigroup schemes 
$C_S(e) \to eSe$, and we have
\[ T_e C_S(e) = T_e(S)_{0,0} \oplus T_e(S)_{1,1}. \]

We may also consider the left centralizer of $e$ in $S$,
\[ C_S^{\ell}(e) := \{ x \in S ~\vert~ e x = e x e \}. \]
This is again a closed subsemigroup scheme of $S$, 
which contains both $Se$ and $_e{}S_e$. Also,
one readily checks that the left multiplication $e_{\ell}$ 
yields a retraction of semigroup schemes 
$C_S^{\ell}(e) \to eSe$, and we have 
\[ T_e C_S^{\ell}(e) = 
T_e(S)_{0,0} \oplus T_e(S)_{0,1} \oplus T_e(S)_{1,1}. \]
Moreover, $C_S^{\ell}(e) \cap U$ is the preimage of 
$G(eSe)$ under $e_{\ell}$. 

The right centralizer of $e$ in $S$, 
\[ C_S^r(e) := \{ x \in S ~\vert~ x e = e x e \}, \]
satisfies similar properties; note that 
$C_S(e) = C_S^{\ell}(C_S^r(e)) = C_S^r(C_S^{\ell}(e))$.
Also, one easily checks that $U$ is stable under 
$C_S^{\ell}(e) \times C_S^r(e)$ acting on $S$ by left 
and right multiplication.

\smallskip

\noindent
(iii) Recall the description of Green's relations 
for an algebraic monoid $M$ with dense unit group $G$ 
(see \cite[Thm.~1]{Putcha84}). For any $x,y \in M$, we have:
\[ x \mathscr{L} y \Leftrightarrow 
\overline{Mx} = \overline{My} \Leftrightarrow Gx = Gy, \quad
x \mathscr{R} y \Leftrightarrow 
\overline{xM} = \overline{yM} \Leftrightarrow xG = yG, \]
\[ x \mathscr{J} y \Leftrightarrow 
\overline{MxM} = \overline{MyM} \Leftrightarrow GxG = GyG. \]
(Indeed, $x \mathscr{L} y$ if and only if $M x = M y$; then
$\overline{Mx} = \overline{My}$. Since $Gx$ is the unique dense
open $G$-orbit in $\overline{Mx}$, it follows that $Gx = Gy$.
Conversely, if $Gx = Gy$ then $M x = M y$. This proves the
first equivalence; the next ones are checked similarly). 

In view of (i), it follows that $U e \subset G e$ 
for any idempotent $e$ of $M$. Likewise, $e U \subset e G$ 
and hence $U e U \subset G e G$;
also, $e U e \subset e G e$. These inclusions are generally 
strict, e.g., when $M$ is the monoid of $n \times n$ matrices 
and $e \neq 0,1$. Thus, $Ue$ is in general strictly
contained in the $\mathscr{L}$-class of $e$, and likewise
for $eU$, $UeU$. 

Also, in view of (ii), $U$ is stable under left multiplication 
by $C_G^{\ell}(e)$ and right multiplication by $C_G^r(e)$. In 
particular, $Ue$ is an open subset of $Me$ containing 
$C_G^{\ell}(e) e$. If $M$ is irreducible, then $C_G^{\ell}(e) e$ 
is open in $Me$ by \cite[Thm.~6.16 (ii)]{Putcha88}. 
As a consequence, $U$ contains the open 
$C_G^{\ell}(e) \times C_G^r(e)$-stable neighborhood $M_0$ of 
$e$ in $M$, whose structure is described in 
\cite[Thm.~2.2.1]{Brion08}. Yet $M_0$ is in general 
strictly contained in $U$.
\end{remarks}

\subsection{Smoothness}
\label{subsec:smo}

In this subsection, we first obtain a slight generalization 
of Theorem \ref{thm:mon}; we then apply the result to intervals 
in idempotents of irreducible algebraic semigroups.

Recall that an algebraic monoid $M$ is \emph{unit dense}
if it is the closure of its unit group; this holds e.g. when $M$
is irreducible. We may now state:

\begin{theorem}\label{thm:ud}
Let $M$ be a unit dense algebraic monoid, $G$ its unit group, 
and $T$ a maximal torus of $G$; denote by $M^o$ (resp. $G^o$) 
the neutral component of $M$ (resp. of $G$), and by $\overline{T}$
the closure of $T$ in $M$. Then the scheme $E(M)$ is smooth, 
and equals $E(M^o)$. Moreover, the connected components of $E(M)$
are conjugacy classes of $G^o$; every such component meets 
$\overline{T}$. 
\end{theorem}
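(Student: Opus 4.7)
My plan reduces Theorem~\ref{thm:ud} to Putcha's results for irreducible linear algebraic monoids, via two successive reductions combined with the local structure analysis from Subsection~\ref{subsec:loc}.

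\textbf{Step 1 (Unit dense to irreducible).} Because $G$ is open and dense in $M$, the irreducible components of $M$ coincide with its connected components and are the cosets $gM^o$ for $g \in G/G^o$; in particular, $M^o$ is open in $M$. The natural projection $M \twoheadrightarrow G/G^o$ is then a homomorphism of algebraic monoids to a finite group, which has only the trivial idempotent; so every $e \in E(M)$ lies in $M^o$, giving $E(M) = E(M^o)$ as closed subschemes. Every maximal torus of $G$ is connected, hence contained in $G^o = G(M^o)$, so it suffices to prove the theorem when $M$ is irreducible (and then $G = G^o$ is connected).

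\textbf{Step 2 (Irreducible to linear).} By the structure theorem for irreducible algebraic monoids (see \cite{Brion-Rittatore}), there is a homomorphism $\alpha : M \to A$ to an abelian variety whose fiber $M_{\aff} := \alpha^{-1}(0)$ is an irreducible linear algebraic monoid. Since $A$ has only the trivial idempotent, $\alpha$ sends every idempotent to $0$, yielding $E(M) = E(M_{\aff})$ scheme-theoretically. Moreover, every torus of $G$ maps trivially to $A$, so $T$ lies in $G_{\aff} := G \cap M_{\aff}$ and is a maximal torus of $G_{\aff}$; and the pieces ${}_eMe$, $eM_e$ of Subsection~\ref{subsec:loc} coincide with those computed inside $M_{\aff}$. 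Finally, the $G$-conjugation on $E(M) = E(M_{\aff})$ factors through $G_{\aff}$, because the abelian quotient $G/G_{\aff} \cong A$ acts trivially by conjugation on the affine submonoid $M_{\aff}$.

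\textbf{Step 3 (Putcha plus local structure).} For the irreducible linear algebraic monoid $M_{\aff}$ with connected unit group $G_{\aff}$ and maximal torus $T$, Putcha \cite[Chap.~6]{Putcha88} shows that the varieties ${}_eMe$ and $eM_e$ are smooth, that the connected components of $E(M_{\aff})$ are exactly the $G_{\aff}$-conjugacy classes of idempotents, and that every such class meets $\overline{T}$. Lemma~\ref{lem:idemUeU} identifies an open neighborhood of $e$ in $E(M) = E(M_{\aff})$ with an open subscheme of the smooth product ${}_eMe \times eM_e$, yielding smoothness of $E(M)$ at each $e$; the remaining assertions transfer through the identifications of Step~2.

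\textbf{Main obstacle.} The subtlest point is Step~2: invoking the structure theorem in a form that produces the linear submonoid $M_{\aff}$ containing all idempotents and all maximal tori of $G$, and verifying both the scheme-theoretic equality $E(M) = E(M_{\aff})$ and the triviality of the $G/G_{\aff}$-action by conjugation on $M_{\aff}$. Once this is in place, the combination of Putcha's theorem with Lemma~\ref{lem:idemUeU} delivers all four assertions of Theorem~\ref{thm:ud}.
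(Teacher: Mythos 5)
Your overall architecture (reduce to the irreducible, then to the linear case; obtain smoothness from Lemmas~\ref{lem:UeU} and \ref{lem:idemUeU}; quote Putcha for the fact that every class meets $\overline{T}$) matches the paper's, but two steps are genuinely gapped. First, Step~1 rests on the claim that the irreducible components $gM^o$ of a unit dense monoid are its connected components, so that $M^o$ is open and $E(M)=E(M^o)$ as schemes. This is false in general: the $gM^o$ are closures of the disjoint open sets $gG^o$, but these closures can meet outside $G$ (for instance $M=\{(a,b)\in\bA^2: a^2=b^2\}$ under coordinatewise multiplication, in characteristic $\neq 2$, is unit dense and connected with two crossing irreducible components), so there is no monoid homomorphism $M\to G/G^o$ and $M^o$ need not be open. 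The paper therefore proves smoothness of $E(M)$ \emph{directly} for the possibly reducible unit dense $M$ --- the orbit $GeG$ of the possibly disconnected group $G\times G$ is still smooth, and the $UeU$ argument goes through verbatim --- and only afterwards upgrades the set-theoretic equality $E(M)=E(M^o)$ (quoted from \cite{Brion12}) to a scheme-theoretic one, using that the smooth scheme $E(M)$ is reduced and supported in $M^o$. With your order of reductions, smoothness of $E(M)$ for reducible $M$ is never actually established.

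Second, in Step~3 you attribute to Putcha the statement that the connected components of $E(M_{\aff})$ are exactly the $G_{\aff}$-conjugacy classes; that is essentially the assertion to be proved and is not in \cite[Chap.~6]{Putcha88}. What Putcha supplies is that every conjugacy class meets $\overline{T}$ (Cor.~6.10) and that $E(\overline{T})$ is finite, so there are finitely many classes; the missing ingredient is that each class is \emph{closed} in $M$. The paper proves this: for $e\in E(\overline{T})$ and a Borel subgroup $B=UT\supset T$, completeness of $G/B$ reduces the claim to closedness of the $B$-class, which equals the $U$-class since $T$ centralizes $e$, and orbits of unipotent groups on affine varieties are closed. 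Once the finitely many classes covering $E(M)$ are each closed and connected, each is also open, hence a connected component. You must add this argument (or cite \cite[Lem.~1.2.3]{Brion08}, noting that its characteristic-zero hypothesis is removable). Two smaller points: the Albanese fibration you invoke in Step~2 is established in \cite{Brion-Rittatore} for \emph{normal} monoids, whereas the paper simply takes $M_{\aff}=\overline{G_{\aff}}$ and quotes \cite[Thm.~3]{Brion12} for $E(M)=E(M_{\aff})$ as sets (again sufficient only because smoothness is proved independently); and the smoothness of ${}_eMe$ and $eM_e$ at $e$ is not taken from Putcha but derived in the paper from the openness of $(G\cap U)e(G\cap U)$ in the smooth orbit $GeG$ together with Lemma~\ref{lem:UeU}.
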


\begin{proof}
Consider an idempotent $e \in M$, and its open neighborhood 
$U$ defined by (\ref{eqn:U}). Then $UeU$ is a locally closed 
subvariety of $M$ by Lemma \ref{lem:UeU}. We claim that 
$UeU$ is smooth at $e$.

To prove the claim, note that the subset $GeG$ of $M$ is 
a smooth, locally closed subvariety, since it is an orbit 
of the algebraic group $G \times G$ acting on $M$ by left 
and right multiplication. Also, 
$G e G \supset U e U \supset (G \cap U) e (G \cap U)$,
where the first inclusion follows from Remark 
\ref{rem:green} (iii). Moreover, $G \cap U$ is an open 
neighborhood of $e$, dense in $U$ (as $G$ is dense in $M$). 
Since the orbit map $G \times G \to GeG$, 
$(x,y) \mapsto x e y$ is flat, it follows that 
$(G\cap U) e (G \cap U)$ is an open neighborhood of $e$ in 
$G e G$, and hence in $U e U$. This yields the claim.

By that claim together with Lemma \ref{lem:UeU}, the schemes
$_e{}Me$ and $eM_e$ are smooth at $e$. In view of Lemma
\ref{lem:idemUeU}, it follows that $e$ is contained in a smooth,
locally closed subvariety $V$ of $E(M)$ such that
\[ \dim_e(V) = \dim_e({_e{}M e}) + \dim_e(e M_e). \]
Using Lemmas \ref{lem:tang} and \ref{lem:sub}, we obtain
\[ \dim_e(V) = \dim T_e(M)_{0,1} + \dim T_e(M)_{1,0} 
= \dim T_e(E(M)). \] 
Thus, $V$ contains an open neighborhood of $e$ in $E(M)$; 
in particular, $E(M)$ is smooth at $e$. We have shown that 
the scheme $E(M)$ is smooth.

Next, recall that $M^o$ is a closed irreducible submonoid 
of $M$ with unit group $G^o$ (see 
\cite[Prop.~2, Prop.~6]{Brion12}). Also, $E(M) = E(M^o)$ 
as sets, in view of [loc.~cit., Rem.~6 (ii)]. Since 
$E(M)$ is smooth, it follows that $E(M) = E(M^o)$ as schemes.

To complete the proof, we may replace $M$ with $M^o$ and 
hence assume that $M$ is irreducible; 
then $G$ is connected. Thus, $G$ has a largest closed 
connected affine normal subgroup, $G_{\aff}$ (see e.g. 
\cite[Thm.~16, p.~439]{Rosenlicht}). 
Denote by $M_{\aff}$ the closure of $G_{\aff}$ in $M$. 
By \cite[Thm.~3]{Brion12}, $M_{\aff}$ is an irreducible 
affine algebraic monoid with unit group $G_{\aff}$, and 
$E(M) = E(M_{\aff})$ as sets. Thus, we may further assume that 
$M$ is affine, or equivalently linear (see 
\cite[Thm.~3.15]{Putcha88}). Then every conjugacy class in $E(M)$
meets $\overline{T}$ by [loc.~cit., Cor.~6.10].
Moreover, $E(\overline{T})$ is finite by [loc.~cit., Thm.~8.4]
(or alternatively by Proposition \ref{prop:com}). Thus, 
it suffices to check that the $G$-conjugacy class of 
every $e \in E(\overline{T})$ is closed in $M$. 

This assertion is shown in \cite[Lem.~1.2.3]{Brion08} 
under the additional assumption that $k$ has characteristic $0$. 
Yet that assumption is unnecessary; we recall the argument 
for the convenience of the reader. Let $B$ be a Borel subgroup 
of $G$ containing $T$; since $G/B$ is complete, 
it suffices to show that the $B$-conjugacy class of $e$ is closed 
in $M$. But that class is also the $U$-conjugacy class of $e$,
where $U$ denotes the unipotent part of $B$; indeed, we have 
$B = UT$, and $T$ centralizes $e$. So the desired closedness
assertion follows from the fact that all orbits of a unipotent 
algebraic group acting on an affine variety are closed 
(see e.g. \cite[Prop.~2.4.14]{Springer}).
\end{proof}

\begin{remarks}\label{rem:smooth}
(i) If $S$ is a \emph{smooth} algebraic semigroup, 
then the scheme $E(S)$ is smooth as well. 
Indeed, for any idempotent $e$ of $S$,
the variety $Se$ is smooth (since it is the image of the smooth 
variety $S$ under the retraction $e_r$). In view of Lemma 
\ref{lem:Ue}, it follows that $_e{}Se$, and likewise $eS_e$, 
are smooth at $e$. This implies in turn that $E(S)$ is smooth
at $e$, by arguing as in the third paragraph of the proof of 
Theorem \ref{thm:ud}.

\smallskip

\noindent
(ii) In particular, the scheme of idempotents of any 
finite-dimensional associative algebra $A$ is smooth. 
This can be proved directly as follows. Firstly, one reduces 
to the case of an unital algebra: consider indeed the algebra 
$B := k \times A$, where the multiplication is given by 
$(t, x)(u, y) := (tu, t y + u x + x y)$. Then $B$ is a 
finite-dimensional associative algebra with unit $(1,0)$.
Moreover, one checks that the scheme $E(B)$ is the disjoint 
union of two copies of $E(A)$: the images of the morphisms 
$x \mapsto (0,x)$ and $x \mapsto (1,-x)$. Secondly, if $A$ is
unital with unit group $G$, and $e \in A$ is idempotent,
then the tangent map at $1$ of the orbit map
\[ G \longrightarrow A, \quad g \longmapsto g e g^{-1} \]
is identified with $f_r - f_{\ell}$ under the natural 
identifications of $T_1(G)$ and $T_e(A)$ with $A$. 
In view of Lemma \ref{lem:tang}, it follows that 
the conjugacy class of $e$ contains a neighborhood of 
$e$ in $E(A)$; this yields the desired smoothness assertion.

\smallskip

\noindent
(iii) One may ask for a simpler proof of Theorem \ref{thm:ud}
based on a tangent map argument as above. But in the setting of
that theorem, there seems to be no relation between the Zariski 
tangent spaces of $M$ at the smooth point $1_M$ and at the 
(generally singular) point~$e$.
\end{remarks}

Still considering a unit dense algebraic monoid $M$ with unit group $G$,
we now describe the isotropy group scheme of any idempotent $e \in M$
for the $G$-action by conjugation, i.e., the centralizer $C_G(e)$ of $e$ 
in $G$. Recall from Remark \ref{rem:green} (ii) that the centralizer of 
$e$ in $M$ is equipped with a retraction of monoid schemes 
$\tau : C_M(e) \to eMe$; thus, $\tau$ restricts to a retraction of 
group schemes that we still denote by $\tau : C_G(e) \to G(eMe)$.
We may now state the following result, which generalizes 
\cite[Lem.~1.2.2 (iii)]{Brion08} with a more direct proof:

\begin{proposition}\label{prop:iso}
With the above notation, we have an exact sequence of group schemes
\[ 1 \longrightarrow {_e{}G_e} \longrightarrow C_G(e) 
\stackrel{\tau}{\longrightarrow} 
G(eMe) \longrightarrow 1. \]
\end{proposition}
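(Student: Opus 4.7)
The first step is to verify that $\tau$ is a well-defined homomorphism of group schemes. From Remark~\ref{rem:green}(ii), $\tau: C_M(e) \to eMe$ is a retraction of monoid schemes, hence preserves units: for any $g \in C_G(e)$, one has $\tau(g)\tau(g^{-1}) = \tau(1_M) = e$, the identity of $eMe$, so $\tau(g) \in G(eMe)$. For the kernel, an element $g \in C_G(e)$ with $\tau(g) = ege = e$, combined with the centralizer relation $eg = ge$, satisfies $eg = (eg)e = ege = e$ and similarly $ge = e$; hence $g \in {_e{}M_e} \cap G = {_e{}G_e}$. The reverse inclusion ${_e{}G_e} \subseteq \kernel\tau$ is immediate.

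The main content is thus the surjectivity of $\tau$. My plan is to reduce to the linear case, following the pattern of the proof of Theorem~\ref{thm:ud}. After replacing $M$ by its neutral component $M^o$ (so we may assume $M$ irreducible), consider $M_{\aff} := \overline{G_{\aff}}$, a closed irreducible linear submonoid containing~$e$. The antiaffine subgroup $G_{\ant} \subseteq G$ (so that $G = G_{\aff}\cdot G_{\ant}$) is central in $G$, hence contained in $C_G(e)$, and maps to a subgroup of $G(eMe)$ under $\tau$; combined with this contribution, it then suffices to establish surjectivity for the linear monoid $M_{\aff}$ and its unit group $G_{\aff}$.

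For the linear case, the standard explicit lift is available: choosing a faithful representation $M_{\aff} \hookrightarrow M_n$ in which $e$ is realized as a diagonal projection, the element $\tilde g := g + (1_M - e) \in \GL_n$ commutes with~$e$, is invertible, and satisfies $e\tilde g e = g$. The principal obstacle is verifying that $\tilde g$ actually lies in $M_{\aff}$ rather than merely in $M_n$; this relies on Putcha's structure theory for linear algebraic monoids \cite[Chap.~6]{Putcha88}, exploiting the density of $G_{\aff}$ in $M_{\aff}$ and a closure argument. An abstract alternative, avoiding explicit representations, would be a dimension-and-closedness argument: using the tangent space decomposition~(\ref{eqn:decS}) together with the smoothness of $E(M)$ provided by Theorem~\ref{thm:ud}, one computes $\dim C_G(e) = \dim {_e{}G_e} + \dim G(eMe)$; closedness of the image of a morphism of algebraic group schemes then forces $\tau(C_G(e)^o) = G(eMe)^o$ on identity components, and a density argument using that $G$ is dense in $M$ (applied to $eMe$ via the retraction) recovers the remaining components.
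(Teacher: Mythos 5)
Your reduction of exactness to the surjectivity of $\tau$, together with the kernel computation, is fine (the paper treats these points as immediate, noting only that surjectivity on $k$-points suffices because $G(eMe)$ is reduced). The gap is in the surjectivity itself, which is the entire content of the proposition. Your explicit lift $\tilde g := g + (1_M - e)$ is formed in the ambient algebra $\End(V)$ and in general does \emph{not} lie in $M_{\aff}$; this is not a verification that Putcha's structure theory or a closure argument will supply, but a false claim. Take $M = \{\diag(a,b,ab)\} \subset M_3(k)$, the closure of a two-dimensional diagonal torus, and $e = \diag(1,0,0)$. For $x = \diag(a,0,0) \in G(eMe)$ with $a \neq 1$, the element $x + (1-e) = \diag(a,1,1)$ violates the equation cutting out $M$, even though a lift in $C_G(e)$ exists (e.g.\ $\diag(a,1,a)$). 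So the lift must be produced intrinsically, not by a formula in the matrix algebra. Your fallback dimension count also does not close the gap: $C_G(e)$ and ${_e{}G_e}$ are subgroups through $1_M$, whereas the decomposition (\ref{eqn:decS}) lives in $T_e(M)$, and Remark \ref{rem:smooth} (iii) warns precisely that there is no usable relation between the tangent spaces at $1_M$ and at the (generally singular) point $e$. Indeed, since the kernel of $\tau$ is ${_e{}G_e}$, the identity $\dim C_G(e) = \dim {_e{}G_e} + \dim G(eMe)$ is essentially equivalent to the surjectivity you are trying to prove, so assuming it is circular. (Your reduction to $M_{\aff}$ via $G_{\ant}$ would also need the identification $G(eMe) = \tau(G_{\ant})\cdot G(eM_{\aff}e)$, which you do not justify.)

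For contrast, the paper's proof avoids representations and dimension counts entirely. Given $x \in G(eMe)$, one has $xM = eM$ (since $x \in eM$ and $e = xx^{-1} \in xM$); as $xG$ is the unique dense $G$-orbit in $xM$, this forces $xG = eG$, so $x = eg$ for some $g \in G$, and such a $g$ automatically satisfies $eg = ege$, i.e.\ lies in the left centralizer $C_G^{\ell}(e)$. This shows $\tau^{\ell} : C_G^{\ell}(e) \to G(eMe)$ is surjective; replacing $M$ by the unit dense submonoid $\overline{C_G^{\ell}(e)}$ (which still contains $e$ and $G(eMe)$) and applying the same argument to the right centralizer lands the lift in $C_G^r(C_G^{\ell}(e)) = C_G(e)$. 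Some such orbit-theoretic input is what your sketch is missing.
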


\begin{proof}
Clearly, the scheme-theoretic kernel of $\tau : C_G(e) \to G(eMe)$ 
equals $_e{}G_e$. Since $G(eMe)$ is reduced, it remains to show that 
$\tau$ is surjective on $k$-rational points. For this,
consider the left stabilizer $C_M^{\ell}(e)$ equipped with its reduced
subscheme structure. This is a closed submonoid of $M$; moreover, 
the map
\[ \tau^{\ell} : C_M^{\ell}(e) \longrightarrow  eMe, \quad
x \longmapsto ex \]
is a retraction and a homomorphism of algebraic monoids
(see Remark \ref{rem:green} (ii) again). Also, 
$C_G^{\ell}(e) := C_M^{\ell}(e) \cap G$ is a closed subsemigroup
of $G$, and hence a closed subgroup by  \cite[Exc.~3.5.1.2]{Renner05}. 
This yields a homomorphism of algebraic groups that we still denote by 
$\tau^{\ell} : C_G^{\ell}(e) \to G(eMe)$.

We claim that the latter homomorphism is surjective. Indeed, let 
$x \in G(eMe)$. Then $x M = eM$, since $x \in e M$ and 
$e = x x^{-1} \in x M$. As $x G$ is the unique dense $G$-orbit in 
$xM$ for the $G$-action on $M$ by right multiplication, it
follows that $x G = e G$. Hence there exists $g \in G$ such that
$x = e g$; then $e g e = x e = x = eg$, i.e., $x \in C_G^{\ell}(e)$.
This proves the claim.

Next, observe that $\overline{C_G^{\ell}(e)}$ (closure in $M$)
is a unit dense submonoid of $M$. Moreover, with the notation of
Theorem \ref{thm:ud}, we have 
$e \in \overline{T}$ and hence 
$T \subset C_G(e) \subset C_G^{\ell}(e)$; thus,
$e \in \overline{C_G^{\ell}(e)}$. Therefore,
$G(eMe) = e C_G^{\ell}(e)$ is contained in $\overline{C_G^{\ell}(e)}$ 
as well. Thus, to show the desired surjectivity, we may replace $M$ 
with $\overline{C_G^{\ell}(e)}$. Then we apply the claim to the right 
stabilizer $C_G^r(e)$; this yields the statement, since 
$C_G^r(C_G^{\ell}(e)) = C_G(e)$.   
\end{proof}

Finally, we apply Theorem \ref{thm:ud} to the structure of intervals 
in $E(S)$, where $S$ is an algebraic semigroup. 
Given two idempotents $e_0, e_1 \in S$ such that $e_0 \leq e_1$, 
we consider 
\[ [e_0,e_1] := \{ x \in E(S) ~\vert~ e_0 \leq x \leq e_1 \}. \]
This has a natural structure of closed subscheme of $S$, namely, 
the scheme-theoretic intersection 
$E(S) \cap {_{e_0}{}S}_{e_0} \cap e_1 S e_1$
(since $e_0 \leq x$ if and only if $x \in {_{e_0}{}S}_{e_0}$, 
and $x \leq e_1$ if and only if $x \in e_1 S e_1$). Note that
$e_1 S e_1$ is a closed submonoid of $S$ containing $e_0$. 
Moreover, 
\begin{equation}\label{eqn:inter} 
{_{e_0}{}S}_{e_0} \cap e_1 S e_1 
= {_{e_0}{}(e_1 S e_1)}_{e_0} =: M(e_0,e_1) = M 
\end{equation}
is a closed submonoid scheme of $e_1 S e_1$ with zero $e_0$,
and $[e_0,e_1] = E(M)$ as schemes. We may now state the 
following result, which sharpens and builds on a result
of Putcha (see \cite[Thm.~6.7]{Putcha88}):

\begin{corollary}\label{cor:int}
Keep the above notation, and assume that $k$ has characteristic 
$0$ and $S$ is irreducible. Then $M$ is reduced, affine, 
and unit dense. Moreover, the interval $[e_0,e_1]$ is smooth; 
each connected component of $[e_0,e_1]$ is a conjugacy class 
of $G(M)^o$.
\end{corollary}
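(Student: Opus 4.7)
The plan is to reduce the statement to Theorem \ref{thm:ud}, using Putcha's classical result \cite[Thm.~6.7]{Putcha88} for the linear case as a black box, with characteristic zero reserved for the upgrade from variety to scheme.

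First I would pass from $S$ to its closed submonoid $e_1 S e_1$. By Remarks \ref{rem:sub}(iii), this is an irreducible algebraic monoid with neutral element $e_1$, and the identification (\ref{eqn:inter}) lets me realize $M$ as ${_{e_0}(e_1 S e_1)_{e_0}}$. Thus I may assume $S$ itself is an irreducible algebraic monoid with $e_1 = 1_S$, so $M = {_{e_0}S_{e_0}}$ and $[e_0,e_1] = E(M)$.

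Next I would analyze the reduced subscheme $M_{\red}$. Its underlying set is the closed submonoid of $S$ consisting of $x$ with $e_0 x = x e_0 = e_0$; it contains $1_S = e_1$ as neutral element and $e_0$ as zero. Since $M_{\red}$ has a zero, \cite[Cor.~3.3]{Brion-Rittatore} shows it is affine, hence linear. Invoking Putcha's Theorem~6.7 (which uses characteristic zero) yields that $M_{\red}$ is unit dense. Theorem \ref{thm:ud} applied to $M_{\red}$ then gives that $E(M_{\red})$ is smooth with connected components the $G(M_{\red})^o$-conjugacy classes.

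The crucial remaining point---and the main obstacle---is to prove $M = M_{\red}$, i.e., to show that the a priori non-reduced scheme $M$ is reduced. Once this is known, affineness and unit density of $M$ are inherited from $M_{\red}$, and $E(M) = E(M_{\red})$ gives the desired description of $[e_0,e_1]$. For reducedness I would work locally at each idempotent $e \in M$: Lemma \ref{lem:sub} identifies $T_e(M)$ with the joint $(0,0)$-eigenspace $T_e(S)_{0,0}$ of the tangent maps $f_\ell$, $f_r$ at $e$, so the task is to exhibit a matching dimension of tangent vectors to $M_{\red}$ at $e$. Combining Proposition \ref{prop:iso} applied inside $M_{\red}$ (which, thanks to characteristic zero, yields a reduced isotropy exact sequence) with the local structure of Lemma \ref{lem:UeU} at $e$, one should be able to match these dimensions and conclude.

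The hard part is exactly this scheme-theoretic upgrade. Putcha's theorem is variety-level, and the jump to a scheme-level statement genuinely requires characteristic zero: Remarks \ref{rem:sub}(i) already exhibits ${_{e}S_{e}}$ non-reduced in positive characteristic, so the reducedness claim in the corollary cannot hold without that hypothesis.
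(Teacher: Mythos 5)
Your reduction to the case $S=e_1Se_1$, $e_1=1_S$, and your final appeal to Theorem \ref{thm:ud} coincide with the paper's proof. But the heart of the matter --- the reducedness of $M={_{e_0}S}_{e_0}$, which you yourself flag as ``the crucial remaining point'' --- is left as a sketch that does not work as stated, whereas the paper settles it (together with affineness and unit density) in one stroke by citing the local structure theorem of \cite{Brion08} (Thm.~2.2.1, Rem.~3.1.3, Lem.~3.1.4): in characteristic $0$, an open neighborhood of $e_0$ in $S$ is a homogeneous fiber bundle whose fiber has ${_{e_0}S}_{e_0}$ as a factor, so $M$ inherits reducedness from the variety $S$. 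Your proposed substitute has three concrete problems. First, Lemma \ref{lem:sub} computes the tangent space of ${_{e_0}S}_{e_0}$ only at the point $e_0$ itself; at another idempotent $e\in M$ it tells you nothing about $T_e(M)$, so ``working locally at each idempotent $e$'' via that lemma is not available. Second, even if you matched $\dim T_e(M)$ with $\dim_e M_{\red}$ at every idempotent, that would only prove $M$ reduced (indeed smooth) at those finitely many special points; reducedness of the scheme $M$ must hold everywhere, and tangent-space computations at isolated points cannot deliver it. Third, the needed inequality $\dim_{e_0}M_{\red}\ge\dim T_{e_0}(S)_{0,0}$ is not a consequence of Proposition \ref{prop:iso} or Lemma \ref{lem:UeU} (which concern $C_G(e)$ and $UeU$, i.e.\ the complementary eigenspaces); producing it amounts to constructing a slice to $Ge_0G$ at $e_0$, which is precisely the content of the local structure theorem you would be re-proving.

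A secondary caveat: your derivation of affineness and unit density of $M_{\red}$ from \cite[Cor.~3.3]{Brion-Rittatore} and \cite[Thm.~6.7]{Putcha88} quietly assumes irreducibility (resp.\ connectedness) of $M_{\red}$, which is not known a priori; the paper avoids this by taking affineness and unit density of $M$ directly from \cite[Lem.~3.1.4]{Brion08}. So the overall architecture is right, but the proof as proposed has a genuine gap exactly where the corollary is hardest.
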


\begin{proof}
We may replace $S$ with $e_1 S e_1$; thus, we may assume that
$S$ is an irreducible algebraic monoid, and $e_1 = 1_S$. 
Then $M = {_{e_0} S}_{e_0}$ is reduced, as follows from the
local structure of $S$ at $e_0$ (see \cite{Brion08}); more
specifically, an open neighborhood of $e_0$ in $M$ is isomorphic
to a homogeneous fiber bundle with fiber ${_{e_0}M}_{e_0}$ by
[loc.~cit., Thm.~2.2.1, Rem.~3.1.3]. Moreover, $M$ is affine 
and unit dense by [loc.~cit., Lem.~3.1.4]. The final assertion 
follows from these results in view of Theorem \ref{thm:ud}.
\end{proof}
 
Note that the above statement does not extend to positive 
characteristics. Indeed, $M$ can be nonreduced in that case, 
as shown by the example in Remark \ref{rem:sub} (i).

\section{Irreducible commutative algebraic semigroups}
\label{icas}

\subsection{The finite poset of idempotents}
\label{subsec:pos}

Throughout this subsection, we consider an irreducible commutative
algebraic semigroup $S$. We first record the following easy result:

\begin{proposition}\label{prop:max}
$S$ has a largest idempotent, $e_1$. Moreover, there exists a positive 
integer $n$ such that $x^n \in e_1 S$ for all $x \in S$. 
If $S$ is defined over $F$, then so is $e_1$.
\end{proposition}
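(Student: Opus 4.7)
The plan is to apply Lemma \ref{lem:const} to a countable family of closed subschemes of $S$. For each idempotent $e \in E(S)$ and each integer $n \geq 1$, set
\[ Y_{e, n} := \{x \in S : e x^n = x^n\}, \]
a closed subscheme of $S$ (the equalizer of the morphisms $x \mapsto e x^n$ and $x \mapsto x^n$). Since $E(S)$ is finite by Proposition \ref{prop:com}, the family $(Y_{e, n})$ is countable. The key claim is that $S = \bigcup_{e, n} Y_{e, n}$; granted this, after base-changing to an uncountable algebraically closed extension of $k$ (which preserves all the relevant structure), Lemma \ref{lem:const} forces some $Y_{e, n}$ to contain a non-empty open subset of $S$. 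Being closed in the irreducible variety $S$, this subscheme must equal $S$, producing an idempotent $e_1 := e$ and an integer $n$ with $e_1 x^n = x^n$ for every $x \in S$.

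The proposition then follows quickly. For any $f \in E(S)$, substituting $x = f$ yields $e_1 f = e_1 f^n = f^n = f$, so $f \leq e_1$; hence $e_1$ is the largest idempotent. The relation $x^n = e_1 x^n \in e_1 S$ is the second assertion. For rationality, suppose $S$ is defined over $F$; then $E(S)$ is a closed $F$-subscheme of $S$, finite and reduced by Proposition \ref{prop:com}(i). By \cite[Thm.~11.2.7]{Springer}, its $k$-points lie in $S(F_s)$ and form a $\Gamma$-stable finite poset. Since $e_1$ is characterized intrinsically as the largest element of this poset, it is $\Gamma$-invariant, and therefore $e_1 \in S(F_s)^{\Gamma} = S(F)$.

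The substance of the proof is the covering claim $S = \bigcup_{e, n} Y_{e, n}$, which I plan to establish by induction on $\dim S$, the case $\dim S = 0$ being trivial. For the inductive step, consider the closed irreducible commutative subsemigroup $\overline{S^2}$. If $\dim \overline{S^2} < \dim S$, then the inductive hypothesis applied to $\overline{S^2}$ yields $e' \in E(\overline{S^2}) \subseteq E(S)$ and $n' \geq 1$ with $e' y^{n'} = y^{n'}$ for all $y \in \overline{S^2}$; since $x^2 \in \overline{S^2}$ for every $x \in S$, this places $x$ in $Y_{e', 2n'}$. If instead $\overline{S^2} = S$, one further reduces to the stabilized closures $\overline{\{x^m : m \geq M\}}$ (after replacing $x$ by a suitable power $x^P$ to isolate a single irreducible component), inducting whenever the dimension drops. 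The residual and, I anticipate, hardest case is when $S = \overline{\langle x \rangle}$ itself carries a dense cyclic subsemigroup; here my strategy is a dimension count on the closed subvariety $\{(y, s) \in S \times S : ys = y\}$, whose dominance properties (inherited from those of $\mu$) should produce $s_0 \in S$ with $x^N s_0 = x^N$ for some large $N$. The density of $\{x^m\}$ then forces $s_0 z = z$ for all $z \in S$, so $s_0$ is an idempotent identity, and in particular $x \in Y_{s_0, 1}$.
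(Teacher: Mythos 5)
Your overall architecture (cover $S$ by the closed sets $Y_{e,n}$, invoke Lemma \ref{lem:const} over an uncountable field, use irreducibility to promote a dense $Y_{e,n}$ to all of $S$, then read off the largest idempotent and the rationality statement) is a legitimate variant of the paper's argument, and your endgame is essentially identical to the paper's. But the paper gets the crucial input for free: it cites \cite[Cor.~1]{Brion-Renner} (strong $\pi$-regularity with a \emph{uniform} exponent $n$), so that with $E(S)$ finite one only needs finitely many constructible sets and irreducibility of $S$ --- no uncountable base change and no Lemma \ref{lem:const} at all. You have instead chosen to reprove the pointwise version of that input from scratch, and that is where your proof has a genuine gap.

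Concretely, the covering claim $S=\bigcup_{e,n}Y_{e,n}$ is not established. All of its content sits in your ``residual case,'' and there the argument is only a declaration of intent: you assert that the closed set $\{(y,s)\in S\times S : ys=y\}$ has ``dominance properties (inherited from those of $\mu$)'' producing $s_0$ with $x^Ns_0=x^N$. Nothing inherits dominance here; the image of that set under the first projection is exactly the set of elements admitting a local identity, and showing that some power of $x$ lies in it \emph{is} the theorem you are trying to prove (it is precisely strong $\pi$-regularity for $\langle x\rangle$). A dimension count gives no purchase, since a priori the set could be as small as $\{(e,e): e\in E(S)\}$. The way to close the gap is the computation already used in the paper's proof of Proposition \ref{prop:exist}: one must produce integers $n,m\geq 1$ and $z$ with $x^n=x^{mn}z$ (which itself requires a density/constructibility argument for $x^{mn}\langle x\rangle$, or another application of Lemma \ref{lem:const}), after which $e:=x^{n(m-1)}z$ is idempotent and satisfies $ex^n=x^n$. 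Without this (or without simply citing \cite[Cor.~1]{Brion-Renner} as the paper does), the proof is incomplete. The remaining steps --- the reduction along $\overline{S^2}$ and powers of $x$, the deduction $f=f^n\in e_1S\Rightarrow f\leq e_1$ using commutativity, and the Galois-invariance of $e_1$ as the largest element of the finite reduced poset $E(S)$ --- are all fine and agree with the paper.
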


\begin{proof}
By the finiteness of $E(S)$ (Proposition \ref{prop:com}) together
with \cite[Cor.~1]{Brion-Renner}, there exist a positive integer 
$n$, an idempotent $e_1 \in S$, and a nonempty open subset $U$ 
of $S$ such that $x^n$ is a unit of the closed submonoid $e_1S$ 
for all $x \in U$. Since $S$ is irreducible, it follows that 
$x^n \in e_1 S$ for all $x \in S$. In particular, $e \in e_1 S$ 
for any $e \in E(S)$, i.e., $e = e_1 e$; hence $e_1$ is the 
largest idempotent.

If $S$ is defined over $F$, then $e_1$ is defined over $F_s$ 
(by Proposition \ref{prop:com} again) and is clearly invariant 
under $\Gamma$. Thus, $e_1 \in E(S)(F)$.
\end{proof}

With the above notation, the unit group $G(e_1 S)$ is a connected 
commutative algebraic group, and hence has a largest subtorus,
$T$. The closure $\overline{T}$ of $T$ in $S$ is a closed toric 
submonoid with neutral element $e_1$ and unit group $T$.
We now gather further properties of $\overline{T}$:

\begin{proposition}\label{prop:bart}
With the above notation, we have:

\noindent
{\rm (i)} $E(S) = E(\overline{T})$.

\noindent
{\rm (ii)} $\overline{T}$ contains every subtorus of $S$.

\noindent
{\rm (iii)} If $S$ is defined over $F$, then so is $\overline{T}$.
\end{proposition}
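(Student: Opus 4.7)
The plan is to address the three assertions in order, reducing everything to the closed submonoid $M := e_1 S$. This $M$ is irreducible (being the image of $S$ under the morphism $x \mapsto e_1 x$) and commutative, with neutral element $e_1$, unit group $G := G(e_1 S)$ a connected commutative algebraic group, and hence a unique maximal subtorus $T$. Two observations will be used repeatedly: every idempotent of $S$ is $\leq e_1$ and so lies in $M$; every subtorus $T' \subset S$ with identity $e$ satisfies $T' = eT' \subset eS \subset M$.

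For (i), I would apply Theorem \ref{thm:ud} directly to the unit dense monoid $M$. The theorem decomposes $E(M)$ into $G$-conjugacy classes, each meeting $\overline T$; commutativity forces every such class to be a single point, whence $E(M) \subset \overline T$. Combining this with $E(S) = E(M)$ gives the claim.

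For (ii), let $T' \subset S$ be a subtorus with identity $e$. Then $T' \subset eM$ and every element of $T'$ is invertible there, so $T' \subset G(eM)$. The key step, which I expect to be the main technical obstacle, is the identification $G(eM) = eG$. The inclusion $eG \subset G(eM)$ is immediate, since $eg$ admits $eg^{-1}$ as inverse in $eM$. For the reverse, the map $\pi := e_{\ell}|_G : G \to G(eM)$, $g \mapsto eg$, is a morphism of algebraic groups, so its image $eG$ is a closed subgroup of $G(eM)$; on the other hand $G$ is dense in $M$, so $eG$ is dense in $eM$ and hence in the open subset $G(eM)$. A closed dense subgroup must be the whole group, giving $eG = G(eM)$. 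Now $T' \subset eG$; letting $H$ denote the identity component of $\pi^{-1}(T')$, the quotient $\pi^{-1}(T')/H$ is finite, so the finite-index subgroup $\pi(H)$ of the connected torus $T'$ equals $T'$. The maximal torus $T_H$ of $H$ is contained in $T$ by maximality of $T$ in $G$, and the quotient $H/T_H$ is a connected commutative algebraic group with no nontrivial subtorus; hence the induced morphism $H/T_H \to T'/\pi(T_H)$, targeting a torus, is trivial, forcing $\pi(T_H) = T'$. Thus $T' = eT_H \subset eT \subset \overline T$.

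For (iii), if $S$ is defined over $F$ then $e_1 \in S(F)$ by Proposition \ref{prop:max}, so $M = e_1 S$ and its open unit group $G$ are defined over $F$. Because $T$ is the unique maximal subtorus of the connected commutative $F$-group $G$, it is stable under $\Gamma$, hence defined over $F$; its closure $\overline T$ inherits the same property.
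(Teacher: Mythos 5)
Your proposal is correct and follows essentially the same route as the paper's proof: reduce to the irreducible commutative monoid $e_1 S$, deduce (i) from Theorem \ref{thm:ud} (conjugacy classes being singletons by commutativity), prove (ii) by surjecting the unit group $G$ onto $G(eS)$ and then using the affine/anti-affine and torus-times-unipotent decompositions to replace the preimage of $T'$ by a subtorus of $T$, and obtain (iii) from the uniqueness of the maximal subtorus of the $F$-group $G(e_1S)$. The only loose point is in (iii), where ``stable under $\Gamma$, hence defined over $F$'' presupposes that $T$ is already defined over $F_s$ (a separability issue in positive characteristic); the paper covers this by citing SGA3 for the fact that the largest subtorus of a commutative $F$-group is defined over $F$.
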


\begin{proof}
(i) Note that $\overline{T} = e_1 \overline{T}$, and 
$E(S) = e_1 E(S) = E(e_1 S)$. Thus, we may replace $S$ 
with $e_1 S$, and hence assume that $S$ is an irreducible 
commutative algebraic monoid. Then the first assertion follows
from Theorem \ref{thm:ud}.

(ii) Let $S'$ be a subtorus of $S$ (i.e., a locally closed 
subsemigroup which is isomorphic to a torus as an algebraic 
semigroup), and denote by $e$ the neutral element of $S'$. Then 
$S' = e S' \subset e S = e_1 e S \subset e_1 S$.
Hence we may again replace $S$ with $e_1 S$, and assume that
$S$ is an irreducible commutative algebraic monoid.
Now consider the map 
\[ \varphi : S \longrightarrow eS, \quad x \longmapsto xe. \]
Then $\varphi$ is a surjective homomorphism of algebraic monoids. 
Thus, $\varphi$ restricts to a homomorphism of unit groups
$G := G(S) \to G(eS)$; the image of that homomorphism is closed,
and also dense since $G$ is dense in $S$. Thus, $\varphi$ 
sends $G$ onto $G(eS)$. Since $S' = G(e\overline{S'})$ is a closed 
connected subgroup of $G(eS)$, there exists a closed connected 
subgroup $G'$ of $G$ such that $\varphi(G') = S'$. 
Let $G'_{\aff}$ denote the largest closed connected affine subgroup
of $G'$. Since $S'$ is affine, we also have 
$\varphi(G'_{\aff}) = S'$, as follows from the decomposition 
$G' = G'_{\aff} G'_{\ant}$, where $G'_{\ant}$ denotes the largest 
closed subgroup of $G$ such that every homomorphism from $G'_{\ant}$ 
to an affine algebraic group is constant
(see e.g. \cite[Cor.~5, pp. 440--441]{Rosenlicht}). But 
in view of the structure of commutative affine algebraic groups
(see e.g. \cite[Thm.~3.1.1]{Springer}), we have 
$G'_{\aff} =  T' \times U'$, where $T'$ (resp. $U'$) denotes 
the largest subtorus (resp. the largest unipotent subgroup) 
of $G'_{\aff}$. Thus, $\varphi(T') = S'$, that is, $S' = e T'$. 
Since $T' \subset T$ and $e \in \overline{T}$, it follows that 
$S' \subset \overline{T}$.
  
(iii) Since $e_1$ is defined over $F$, so is $e_1 S$.
Hence the algebraic group $G(e_1 S)$ is also defined over $F$, 
by \cite[Prop.~11.2.8 (ii)]{Springer}. In view of 
\cite[Exp.~XIV, Thm.~1.1]{SGA3}, it follows that the largest 
subtorus $T$ of $G(e_1S)$ is defined over $F$ as well. Thus, 
so is $\overline{T}$. 
\end{proof}

\subsection{Toric monoids}
\label{subsec:toric}

As mentioned in the introduction, the toric monoids are 
exactly the affine toric varieties (not necessarily normal). 
For later use, we briefly discuss their structure; details 
can be found e.g. in \cite{Neeb92}, 
\cite[\S 2, \S 3]{Putcha81}, and \cite[\S 3.3]{Renner05}.

The isomorphism classes of toric monoids are in 
a bijective correspondence with the pairs $(\Lambda,\cM)$, 
where $\Lambda$ is a free abelian group and $\cM$ is 
a finitely generated submonoid of $\Lambda$ which generates 
that group. This correspondence assigns to a toric monoid 
$M$ with unit group $T$, the lattice $\Lambda$ of characters 
of $T$ and the monoid $\cM$ of weights of $T$ acting on 
the coordinate ring $\cO(M)$ via its action on $M$ by
multiplication. Conversely, one assigns to a pair 
$(\Lambda,\cM)$, the torus $T := \Hom(\Lambda,\bG_m)$ 
(consisting of all group homomorphisms from $\Lambda$ 
to the multiplicative group) and the monoid 
$M := \Hom(\cM,\bA^1)$ (consisting of all monoid
homomorphisms from $\cM$ to the affine line equipped 
with the multiplication). The coordinate ring $\cO(T)$
(resp. $\cO(M)$) is identified with the group ring
$k[\Lambda]$ (resp. the monoid ring $k[\cM]$).

Via this correspondence, the idempotents of $M$ are 
identified with the monoid homomorphisms 
$\varepsilon : \cM \to \{ 1,0 \}$. 
Any such homomorphism is uniquely determined 
by the preimage of $1$, which is the intersection 
of $\cM$ with a unique face of the cone, $C(\cM)$, 
generated by $\cM$ in the vector space 
$\Lambda \otimes_{\bZ} \bR$. Moreover, every $T$-orbit 
in $M$ (for the action by multiplication) contains 
a unique idempotent. This defines bijective correspondences 
between the idempotents of $M$, the faces of the 
rational, polyhedral, convex cone $C(\cM)$,
and the $T$-orbits in $M$. Via these correspondences, 
the partial order relation on $E(M)$ is identified with
the inclusion of faces, resp. of orbit closures; 
moreover, the dimension of a face is the dimension 
of the corresponding orbit. In particular, there is 
a unique closed orbit, corresponding to the minimal 
idempotent and to the smallest face of $C(\cM)$
(which is also the largest linear subspace contained
in $C(\cM)$).

The toric monoid $M$ is defined over $F$ if and only if 
$\Lambda$ is equipped with a continuous action of 
$\Gamma$ that stabilizes $\cM$ (this is shown e.g. in 
\cite[Prop.~3.2.6]{Springer} for tori; the case of 
toric monoids is handled by similar arguments). 
Under that assumption, the above correspondences are 
compatible with the actions of $\Gamma$.

We also record the following observation:

\begin{lemma}\label{lem:toric}
Let $M$ be a toric monoid, and $S$ a closed irreducible 
subsemigroup of $M$. Then $S$ is a toric monoid.
\end{lemma}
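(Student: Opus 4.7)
The plan is to locate $S$ inside a piece of $M$ which is itself toric, and then realize $G(S)$ as a subtorus of the unit torus of that piece. Write $T := G(M)$; by the toric dictionary recalled before the lemma, the $T$-orbits on $M$ are finite in number, each contains a unique idempotent, and the orbit through an idempotent $e$ has closure $eM$, which is itself a toric monoid with unit torus $T' := Te$. Since $S$ is irreducible, its generic point lies in a unique $T$-orbit; calling $e$ the idempotent in that orbit, one obtains $S \subset \overline{Te} = eM$, while $U := S \cap Te = S \cap T'$ is an open dense subset of $S$.

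The next step is to study $S \cap T'$ inside the torus $T'$. It is closed in $T'$ (because $S$ is closed in $eM$ while $T'$ is open in $eM$), it is stable under multiplication, and it is irreducible as an open subset of the irreducible variety $S$. By \cite[Exc.~3.5.1.2]{Renner05} (the fact, already used in the paper, that a closed subsemigroup of an algebraic group is a subgroup), $S \cap T'$ is therefore a closed connected subgroup of $T'$, that is, a subtorus $T_S$.

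From $T_S \subset S$ the identity $e$ of $T'$ lies in $S$; since $S \subset eM$ and $e$ is the identity of the monoid $eM$, this $e$ becomes the identity of $S$, so $S$ is a monoid. Finally, one checks $G(S) = T_S$: the inclusion $G(S) \subset S \cap T' = T_S$ is clear, and conversely any $s \in T_S$ has its inverse in the group $T_S \subset S$, hence lies in $G(S)$. This exhibits $S$ as an irreducible algebraic monoid with torus unit group, i.e., a toric monoid. I do not foresee a genuine obstacle; the argument is essentially a repackaging of the toric dictionary recalled just before the statement, and the only step meriting a moment of attention is verifying that $S \cap T'$ is closed in $T'$, which is what licenses the passage from subsemigroup to subgroup via Renner's exercise.
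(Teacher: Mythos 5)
Your proof is correct and follows essentially the same route as the paper: locate the unique $T$-orbit whose intersection with $S$ is dense, observe that this intersection is a closed irreducible subsemigroup of a torus, and apply \cite[Exc.~3.5.1.2]{Renner05} to conclude it is a subtorus. The extra details you supply (closedness of $S\cap T'$ in $T'$, and the verification that $e$ is the identity of $S$ with $G(S)=T_S$) are exactly the steps the paper leaves implicit in ``this yields our assertion.''
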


\begin{proof}
Since $S$ is irreducible, there is a unique $T$-orbit in $M$
that contains a dense subset of $S$. Thus, there exists 
a unique idempotent $e_S$ of $M$ such that $S \cap e_S T$ 
is dense in $S$. Then $S \cap e_S T$ is a closed irreducible 
subsemigroup of the torus $e_S T$, and hence a subtorus
in view of \cite[Exc.~3.5.1.2]{Renner05}. 
This yields our assertion.
\end{proof}

Next, consider a toric monoid $M$ with unit group $T$ 
and smallest idempotent $e_0$, so that the closed $T$-orbit 
in $M$ is $e_0 T$. Denote by $T_0$ the reduced
neutral component of the isotropy group $T_{e_0}$, 
and by $\overline{T_0}$ the closure of $T_0$ in $M$.
Clearly, $\overline{T_0}$ is a toric monoid with torus $T_0$
and zero $e_0$; this monoid is the reduced scheme of the monoid
scheme $M(e_0,e_1)$ defined by (\ref{eqn:inter}). We now gather 
further properties of $\overline{T_0}$: 

\begin{proposition}\label{prop:zero}
With the above notation, we have:

\noindent
{\rm (i)} $\dim(T_0)$ equals the length of any maximal 
chain of idempotents in $M$.

\noindent
{\rm (ii)} $\overline{T_0}$ is the smallest closed irreducible 
subsemigroup of $M$ containing $E(M)$.

\noindent
{\rm (iii)} $\overline{T_0}$ is the largest closed irreducible
subsemigroup of $M$ having a zero.

\noindent
{\rm (iv)} If $M$ is defined over $F$, then so is $\overline{T_0}$.
\end{proposition}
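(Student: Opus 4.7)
The plan is to translate the entire problem into the combinatorial language of Subsection \ref{subsec:toric}. Let $(\Lambda,\cM)$ be the data of $M$, with cone $C := C(\cM) \subset \Lambda \otimes_\bZ \bR$ and lineality space $L$ (the smallest face of $C$, corresponding to $e_0$). As a monoid homomorphism $\cM \to \bA^1$, the idempotent $e_0$ is the indicator of $L$: $e_0(m)=1$ if $m\in L$ and $0$ otherwise. The isotropy group scheme $T_{e_0}$ is therefore cut out in $T$ by the characters in $\cM \cap L$, and its reduced neutral component $T_0$ is the subtorus of $T$ with character lattice $\Lambda/(\Lambda \cap L)$. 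Consequently $\overline{T_0}$ is the toric monoid corresponding to the data $(\Lambda/(\Lambda \cap L),\bar\cM)$, where $\bar\cM$ is the image of $\cM$; its cone is $C/L$, which is strictly convex.

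Part (i) follows at once: $\dim T_0 = \dim\Lambda - \dim L$, while every maximal chain of faces of the polyhedral cone $C$ runs from $L$ to $C$ with each step increasing dimension by exactly one, so has length $\dim C - \dim L$. Part (iv) is equally painless: by Proposition \ref{prop:com}(iii) the point $e_0$ is defined over $F$; the scheme-theoretic isotropy $T_{e_0}\subset T$ is then defined over $F$; its reduced identity component $T_0$ is defined over $F$ by \cite[Exp.~XIV, Thm.~1.1]{SGA3}; and $\overline{T_0}$ is the Zariski closure in $M$ of an $F$-subvariety.

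Part (iii) admits a short direct argument. If $N'$ is a closed irreducible subsemigroup of $M$ with zero $z$, then $z\in E(M)$ and $e_0\leq z$, so $ze_0=e_0z=e_0$. For any $x\in N'$ the zero property gives $xz=zx=z$, whence $xe_0=(xz)e_0=ze_0=e_0$ and symmetrically $e_0x=e_0$. Thus $N'$ is set-theoretically contained in ${_{e_0}M_{e_0}}$; being reduced, $N'\subseteq ({_{e_0}M_{e_0}})_{\red}=\overline{T_0}$.

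Part (ii) carries the main content. The face correspondence first yields $E(\overline{T_0})=E(M)$, since faces of $C/L$ biject with faces of $C$ (all of which contain $L$). For minimality, let $N$ be any closed irreducible subsemigroup of $M$ with $E(M)\subseteq N$; by Lemma \ref{lem:toric}, $N$ is toric, and since $1_M\in E(M)\subseteq N$ its unit element is $1_M$ and its unit torus $T_N$ is a subtorus of $T$ with $N=\overline{T_N}$. Let $K_N$ denote the kernel of the restriction $\Lambda \twoheadrightarrow X^*(T_N)$. The hypothesis that every $e_F$ lies in $N$ translates to: for each face $F$ of $C$, the indicator $e_F:\cM\to\{0,1\}$ is constant on each $K_N$-equivalence class of $\cM$. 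Applied to a facet $F$ with supporting linear form $\ell_F\geq 0$ on $C$, this condition forces $\ell_F(K_N)=0$; intersecting over all facets yields $K_N\subseteq L$, hence $T_0\subseteq T_N$ and $\overline{T_0}\subseteq N$. The main obstacle lies in this last extraction when $\cM$ is not saturated in $\Lambda$: given $\chi\in K_N$ with $\ell_F(\chi)\ne 0$, one must produce two elements of $\cM$ differing by a positive multiple of $\chi$ with one lying in $F$ and the other not. This is arranged by choosing a rational point in the relative interior of $F$, scaling it into $\cM$, and then using the finiteness of $(C\cap\Lambda)/\cM$ to absorb a further multiple of $\chi$ into $\cM$, producing the violating pair that forces $\ell_F(\chi)=0$.
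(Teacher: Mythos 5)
Your arguments for (i), (ii) and (iv) are sound, but part (iii) rests on the equality $({_{e_0}M_{e_0}})_{\red}=\overline{T_0}$, and this equality is false in general. Set-theoretically, ${_{e_0}M_{e_0}}=\{x: x(m)=1 \text{ for all } m\in\cM\cap L\}$ is the closure of the \emph{reduced} isotropy group $(T_{e_0})_{\red}$, which is disconnected whenever the unit group $\Lambda_{e_0}=\cM\cap L$ fails to be saturated in $\Lambda\cap L$; its reduced scheme is then a union of several translates of $\overline{T_0}$, not $\overline{T_0}$ itself. Concretely, take $\Lambda=\bZ^2$ and $\cM$ generated by $(2,0)$, $(-2,0)$, $(0,1)$, $(1,1)$: then $C$ is the upper half-plane, $L=\bR\times\{0\}$, $\cM\cap L=2\bZ\times\{0\}\subsetneq\Lambda\cap L$, so $T_{e_0}=\mu_2\times\bG_m$ and the point $(-1,1)\in T$ lies in ${_{e_0}M_{e_0}}$ but not in $\overline{T_0}$. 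So from ``$N'$ is reduced and contained in ${_{e_0}M_{e_0}}$'' you cannot conclude $N'\subseteq\overline{T_0}$; even adding irreducibility does not immediately help, since in the example both components of $({_{e_0}M_{e_0}})_{\red}$ pass through $e_0$. (The paper's own setup paragraph asserts the same identification of $\overline{T_0}$ with the reduced scheme of $M(e_0,e_1)$, but its proof of (iii) does not use it: it instead considers $\overline{ST_0}$, observes that $T_0$ has finitely many orbits in the fiber $M_{e_0}$, identifies $\overline{ST_0}$ with an orbit closure $\overline{e_ST_0}$, and invokes $e_S\in\overline{T_0}$ from (ii). To repair your argument you would need to show that the dense torus of the toric monoid $N'$ lies in a single translate $f(T_{e_0})_{\red}$ and, being connected and containing the idempotent $f$, in $fT_0$ --- which is essentially the extra work the paper's argument performs.)

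The rest is correct but takes a different route from the paper in part (ii). The paper deduces (ii) from (i) by a dimension count: a closed irreducible $S\supseteq E(M)$ is a submonoid with unit torus $G(S)\subseteq T$, the chain lengths of $E(S)=E(M)$ force $\dim G(S)_0=\dim T_0$, and $G(S)_0\subseteq T_0$ then gives equality. Your argument instead works directly with the lattice $K_N$ and the facet indicators; the key combinatorial step (producing $m\in\cM\cap F$ and $m+j\chi\in\cM\setminus F$) is only sketched, but it is completable, e.g.\ by taking $m_0\in\cM$ in the relative interior of $F$, noting $Nm_0+\chi\in\mathrm{int}(C)$ for $N\gg0$, and using that $j(Nm_0+\chi)\in\cM$ for $j\gg0$ since $\tilde\cM=C\cap\Lambda$ is a finite $\cM$-module. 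This is more labor-intensive than the paper's proof but makes the role of non-saturation explicit. Parts (i) and (iv) agree with the paper's arguments in substance; in (iv) you should phrase the descent via the maximal subtorus of the diagonalizable $F$-group $T_{e_0}$ (as the paper does through $\Lambda_0$) rather than via the ``reduced identity component,'' which is delicate over imperfect $F$.
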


\begin{proof}
(i) Since $e_0 T$ is closed in $M$, we easily obtain 
that $e_0 M = e_0 T$. Also, $e_0 T$ is isomorphic to 
the homogeneous space $T/T_{e_0}$, where $T_{e_0}$ 
denotes the (scheme-theoretic) stabilizer of $e_0$ in $T$. 
Thus, the morphism $\varphi: M \to e_0 M$, $x \mapsto e_0 x$
makes $M$ a $T$-homogeneous fiber bundle over $T/T_{e_0}$; 
its scheme-theoretic fiber at $e_0$ is the closure of $T_{e_0}$ 
in $M$. Therefore, we have 
\[ \dim(M) - \dim(e_0 M) = \dim(T_{e_0}) = \dim(T_0). \] 
Thus, $\dim(T_0)$ is the codimension of the smallest face 
of the cone associated with $M$. This also equals the length 
of any maximal chain of faces, and hence of any maximal chain 
of idempotents. 

(ii) Let $S$ be a closed irreducible subsemigroup of $M$
containing $E(M)$. Then $S$ contains the neutral element of $M$;
it follows that $S$ is a submonoid of $M$, and $G(S)$ is 
a subtorus of $T$. By (i), we have $\dim(T_0) = \dim(G(S)_0)$. 
But $G(S)_0 \subset T_0$, and hence equality holds. Taking 
closures, we obtain that $S$ contains $\overline{T_0}$.

(iii) Let $S$ be a closed irreducible subsemigroup of $M$ 
having a zero, $e$. Then $e$ is idempotent; thus,
$e e_0 = e_0$. For any $x \in S$, we have $x e = e$ and hence
$x e_0 = e_0$. Also, $S$ is a toric monoid by Lemma 
\ref{lem:toric}. Thus, the closure $\overline{S T_0}$ is a
toric monoid, stable by $T_0$ and contained in $M_{e_0}$
(the fiber of $\varphi$ at $e$). Since $T$ has finitely 
orbits in $M$, it follows that $T_{e_0}$ has finitely many
orbits in $M_{e_0}$; since $T_0$ is a subgroup of finite index
in $T_{e_0}$, we see that $T_0$ has finitely many orbits in 
$M_{e_0}$ as well. As a consequence, $\overline{S T_0}$
is the closure of a $T_0$-orbit, and thus equals
$\overline{e_S T_0}$ for some idempotent $e_S$ of $M$. 
But $e_S \in \overline{T_0}$, and hence 
$\overline{S T_0} \subset \overline{T_0}$. In particular,
$S \subset \overline{T_0}$.

(iv) It suffices to show that $T_0$ is defined over $F$. 
For this, we use the bijective correspondence
between $F$-subgroup schemes of $T$ and $\Gamma$-stable
subgroups of $\Lambda$, that associates with any such
subgroup scheme $T'$, the character group of the quotient
torus $T/H$; then  $T'$ is a torus if and only if
the corresponding subgroup $\Lambda'$ is saturated in
$\Lambda$, i.e., the quotient group $\Lambda/\Lambda'$ is
torsion-free (these results follow e.g. from
\cite[Exp.~VIII, \S 2]{SGA3}). 
Under this correspondence, the isotropy subgroup scheme 
$T_{e_0}$ is sent to the largest subgroup $\Lambda_{e_0}$ 
of $\Lambda$ contained in the monoid $\cM$ 
(since $\cO(T/T_{e_0}) = \cO(e_0 T) = \cO(e_0 M)$ is the 
subalgebra of $\cO(M)$ generated by the invertible elements 
of that algebra). Thus, $T_0$ corresponds to the smallest 
saturated subgroup $\Lambda_0$ of $\Lambda$ that contains 
$\Lambda_{e_0}$. Clearly, the action of $\Gamma$ on 
$\Lambda$ stabilizes $\Lambda_{e_0}$, and hence $\Lambda_0$.
\end{proof}

\begin{remark}
The above proof can be shortened by using general
structure results for unit dense algebraic monoids
(see \cite[\S 3.2]{Brion12}). We have chosen to present 
more self-contained arguments.
\end{remark}

\subsection{The toric envelope}
\label{subsec:env}

In this subsection, we return to an irreducible commutative 
algebraic semigroup $S$; we denote by $e_0$ (resp. $e_1$) 
the smallest (resp. largest) idempotent of $S$, by $T$ 
the largest subtorus of $G(e_1 S)$, and by $T_0$ the reduced
neutral component of the isotropy subgroup scheme $T_{e_0}$. 
In view of Proposition \ref{prop:bart}, $\overline{T}$ contains 
$E(S)$ and is the largest toric submonoid of $S$; 
we denote that submonoid by $TM(S)$ to emphasize its intrinsic 
nature. We now obtain an intrinsic interpretation of 
$\overline{T_0}$, thereby completing the proof of Theorem 
\ref{thm:com}:

\begin{proposition}\label{prop:env}
With the above notation, $\overline{T_0}$ is the smallest 
closed irreducible subsemigroup of $S$ containing $E(S)$,
and also the largest toric subsemigroup of $S$ having a zero. 
Moreover, $\overline{T_0}$ is defined over $F$ if so is $S$. 
\end{proposition}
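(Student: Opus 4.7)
The plan is to reduce each assertion to a statement about the toric envelope $\overline{T}$, where Proposition \ref{prop:zero} applies directly; Proposition \ref{prop:bart} provides the link between general $S$ and $\overline{T}$.

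First I reduce to the case where $S$ is a monoid with unit $e_1$, by replacing $S$ with $e_1 S$; this leaves $E(S)$, $T$, $T_0$, and $\overline{T_0}$ unchanged. If $S' \subset S$ is a closed irreducible subsemigroup containing $E(S)$, then $e_1 \in S'$, so $e_1 S'$ is a closed irreducible subsemigroup of $e_1 S$ still containing $E(S) = e_1 E(S)$, and $\overline{T_0} \subset e_1 S'$ implies $\overline{T_0} \subset S'$. Similarly, any toric subsemigroup $S'$ with a zero has identity $1_{S'} \in E(S) \subset e_1 S$, hence $S' = 1_{S'} S' \subset e_1 S$. So we may assume $S = e_1 S$ is an irreducible commutative algebraic monoid.

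For the minimality statement, let $S'$ be a closed irreducible submonoid of $S$ containing $E(S)$; one checks $E(S') = E(S)$ as posets. Applying Proposition \ref{prop:bart} to $S'$ yields the largest subtorus $T' \subset G(S')$ with $E(S') \subset \overline{T'}$, and the inclusion $T' \subset G(S') \subset G(S)$ together with maximality of $T$ forces $T' \subset T$. Proposition \ref{prop:zero} (i), applied to both toric monoids $\overline{T}$ and $\overline{T'}$, shows that $\dim T_0$ and $\dim T'_0$ both equal the length of a maximal chain in $E(S)$, so $\dim T_0 = \dim T'_0$; since $T'_0 \subset T_0$ is an inclusion of subtori of equal dimension, $T'_0 = T_0$, whence $\overline{T_0} = \overline{T'_0} \subset \overline{T'} \subset S'$. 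For the maximality statement, if $S'$ is a toric subsemigroup with a zero and unit group $T'$, then $T'$ is a subtorus of $S$ and Proposition \ref{prop:bart} (ii) gives $T' \subset T$; taking closures, $S' \subset \overline{T}$, and Proposition \ref{prop:zero} (iii) yields $S' \subset \overline{T_0}$. For the $F$-rationality assertion, Propositions \ref{prop:max}, \ref{prop:bart} (iii), \ref{prop:com} (iii) applied to the commutative monoid $\overline{T}$, and \ref{prop:zero} (iv) successively establish that $e_1$, $\overline{T}$, $e_0$, and $\overline{T_0}$ are defined over $F$.

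The main obstacle is the dimension equality $\dim T'_0 = \dim T_0$ at the heart of the minimality argument; it rests on $E(S)$ being an intrinsic invariant of every closed irreducible subsemigroup of $S$ containing it, so that the combinatorial data encoded by Proposition \ref{prop:zero} (i) transfer faithfully from $\overline{T}$ to $\overline{T'}$. Once this is secured, equality of two nested connected subtori of the same dimension closes the argument at once.
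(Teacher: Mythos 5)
Your proof is correct and follows the paper's route: reduce to the toric monoid $\overline{T} = TM(S)$ via Proposition \ref{prop:bart} and then invoke Proposition \ref{prop:zero}; your only real deviation is that, for the minimality claim, you re-derive Proposition \ref{prop:zero}~(ii) by hand (via part (i) and the dimension count $\dim T'_0 = \dim T_0$ for the nested subtori) instead of citing it --- which is exactly how that part is proved in the paper, so this is the same argument with one lemma inlined. One cosmetic slip: in the maximality argument, Proposition \ref{prop:bart}~(ii) gives $T' \subset \overline{T}$ rather than $T' \subset T$ (the latter can fail, e.g.\ for a subtorus lying in a boundary $T$-orbit), but $T' \subset \overline{T}$ is all you actually use to conclude $S' \subset \overline{T}$, so nothing breaks.
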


\begin{proof}
Let $S'$ be a closed irreducible subsemigroup of $S$ 
containing $E(S)$. Then we have
$E(S) \subset TM(S') \subset TM(S)$ 
by Proposition \ref{prop:bart}. Thus, $TM(S')$ contains
$\overline{T_0}$ in view of Proposition \ref{prop:zero}.
Hence $S' \supset \overline{T_0}$.

Next, let $M$ be a toric subsemigroup of $S$ having a zero.
Then $M \subset TM(S)$ by Proposition \ref{prop:bart},
and hence  $M \subset \overline{T_0}$ by Proposition 
\ref{prop:zero} again. The final assertion follows similarly 
from that proposition.
\end{proof}
 
We denote $\overline{T_0}$ by $TE(S)$, and call it the
\emph{toric envelope} of $E(S)$; we may view $TE(S)$ as 
an algebro-geometric analogue of the finite poset $E(S)$.

\begin{corollary}\label{cor:env}
Let $S$ be an algebraic semigroup, $x$ a point of $S$,
and $\langle x \rangle$ the smallest closed subsemigroup 
of $S$ containing $x$.

\noindent
{\rm (i)} $\langle x \rangle$ contains a largest closed toric 
subsemigroup, $TM(x)$. 

\noindent
{\rm (ii)} $E(\langle x \rangle)$ is contained in a smallest 
closed irreducible subsemigroup of $\langle x \rangle$. 
Moreover, this subsemigroup, $TE(x)$, is a toric monoid. 

\noindent
{\rm (iii)} $TM(x) = TM(x^n)$ and $TE(x) = TE(x^n)$ for 
any positive integer $n$.

\noindent
{\rm (iv)} If the algebraic semigroup $S$ and the point $x$ 
are defined over $F$, then so are $TM(x)$ and $TE(x)$. 
\end{corollary}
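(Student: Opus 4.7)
The starting observation is that $\langle x \rangle$ is commutative, as the closure of the set of pairwise commuting powers of $x$; so by Proposition \ref{prop:com}, $E(\langle x \rangle)$ is finite and reduced. The strategy is to reduce to the irreducible case treated by Theorem \ref{thm:com} by extracting a canonical closed irreducible subsemigroup $T \subset \langle x \rangle$ to which Propositions \ref{prop:bart}, \ref{prop:zero}, \ref{prop:env} apply; then $TM(x)$ and $TE(x)$ will be defined as $TM(T)$ and $TE(T)$ respectively.

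To construct $T$, I would use Noetherianity: the descending chain $(\langle x^{N!} \rangle)_{N \geq 1}$ stabilizes, and one checks that the limit $T = \langle x^{N_0!} \rangle$ equals $\bigcap_{n \geq 1} \langle x^n \rangle$; writing $t := x^{N_0!}$, one also has $T = \langle t^k \rangle$ for every $k \geq 1$. To show $T$ is irreducible, I would first rule out isolated points: an isolated point $p \in T$ would satisfy $p = t^{k_0}$ for some $k_0$ (else the closed set $T \setminus \{p\}$ would contain $\{t^k\}$, contradicting $T = \overline{\{t^k\}}$), but then the relation $T = \overline{\{t^{kn} : n \geq 1\}}$ and the isolation of $p$ force $k \mid k_0$ for every $k \geq 1$, a contradiction. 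Writing $T = T_1 \cup \cdots \cup T_r$ for the irreducible decomposition, the multiplication induces a finite commutative semigroup structure on $\{1, \ldots, r\}$ and a semigroup homomorphism $\phi$ assigning to $n$ the component of $t^n$; the property $T = \overline{\{t^{kn}\}}$ forces the cyclic subsemigroup generated by $\phi(k)$ to exhaust $\{1, \ldots, r\}$ for every $k$, which via the standard analysis of cyclic orbits in finite semigroups forces $r = 1$.

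The main tool for transferring the universal properties from $T$ to $\langle x \rangle$ is the $N$-th power morphism $p_N : \langle x \rangle \to \langle x \rangle$, $y \mapsto y^N$. Since $p_N$ sends the dense set $\{x^n\}$ into $\{x^{nN}\} \subset \langle x^N \rangle$, continuity yields $p_N(\langle x \rangle) \subset \langle x^N \rangle$. Applied to an idempotent $e$, this gives $e = e^N \in \langle x^N \rangle$ for every $N$, so $E(\langle x \rangle) \subset T$, whence $E(\langle x \rangle) = E(T)$. For (i), any closed toric subsemigroup $M \subset \langle x \rangle$ has an $N$-divisible torus unit group $G(M)$, so $M = \overline{G(M)^N} \subset \overline{M^N} \subset \langle x^N \rangle$ for every $N$; hence $M \subset T$ and $M \subset TM(T)$ by the maximality of $TM(T)$ in $T$. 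For (ii), any closed irreducible subsemigroup $S'$ containing $E(\langle x \rangle)$ satisfies $p_N(S') \subset S' \cap \langle x^N \rangle$; taking $N = N_0!$, the closure $\overline{p_N(S')}$ is a closed irreducible subsemigroup of $T$ containing $E(T)$ (since $p_N$ fixes idempotents), so $TE(T) \subset \overline{p_N(S')} \subset S'$ by the minimality of $TE(T)$ in $T$.

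Assertion (iii) is immediate from the identity $T(x^n) = \bigcap_m \langle x^{nm} \rangle = \bigcap_k \langle x^k \rangle = T(x)$, whose inclusions both reduce to $\langle x^{nk} \rangle \subset \langle x^k \rangle$. For (iv), if $S$ and $x$ are defined over $F$, then $\{x^n\} \subset S(F)$ is $\Gamma$-stable, so its closure $\langle x \rangle$ is defined over $F$; similarly each $\langle x^n \rangle$ and hence $T$ is defined over $F$, and Propositions \ref{prop:bart}(iii) and \ref{prop:env} then imply that $TM(x)$ and $TE(x)$ are defined over $F$. The main obstacle in carrying this out is establishing the irreducibility of $T$, which requires both the isolated-point analysis and the cycle-component analysis sketched above.
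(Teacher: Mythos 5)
Your overall strategy coincides with the paper's: reduce to an irreducible commutative semigroup of the form $\langle x^n \rangle$, check that all idempotents and all closed toric subsemigroups of $\langle x \rangle$ already lie in it (via the $n$-th power map, which fixes idempotents and restricts to a surjection on any toric submonoid), and then invoke Propositions \ref{prop:bart}, \ref{prop:zero} and \ref{prop:env}. Your handling of the minimality transfer in (ii) --- passing from a closed irreducible $S' \supset E(\langle x \rangle)$ to the closed irreducible subsemigroup $\overline{p_N(S')} \subset S' \cap \langle x^N \rangle$ --- is precisely the step the paper leaves implicit, and you carry it out correctly; likewise (i), (iii) and (iv) match the paper's arguments. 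The one genuine divergence is the key input: the paper simply cites \cite[Lem.~1]{Brion-Renner} for the existence of $n$ with $\langle x^n \rangle$ irreducible, whereas you attempt to reprove this by stabilizing the chain $\langle x^{N!} \rangle$.

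That in-house irreducibility argument is where the gaps lie. First, the ``induced semigroup structure on $\{1,\ldots,r\}$'' is not well defined as stated: $t^n$ may lie in several irreducible components of $T$, and $\overline{T_i T_j}$, though irreducible and hence contained in some component, may be contained in more than one, so the map $\phi$ and the subsequent cyclic-orbit analysis need a more careful setup (for instance, working only with exponents $k$ for which $t^k$ avoids the pairwise intersections $T_i \cap T_j$, which exist because $\{t^k\}$ is dense). Second, the isolated-point step tacitly assumes the powers $t^k$ are pairwise distinct: the implication ``$p = t^{k_0} = t^{km}$ forces $k \mid k_0$'' fails otherwise. When powers coincide, $T$ is a finite cyclic semigroup, and the hypothesis $T = \langle t^k \rangle$ for all $k$ then forces $T$ to be a single point --- irreducible, so harmless, but this case does produce an isolated point and must be separated out. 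Both defects are repairable (they amount to reconstructing the proof of the cited lemma), but as written the irreducibility of $T$, on which everything else rests, is not established.
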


\begin{proof}
(i) By \cite[Lem.~1]{Brion-Renner}, there exists a positive 
integer $n$ such that $\langle x^n \rangle$ is irreducible. 
Since $\langle x^n \rangle$ is also commutative, it contains
a largest closed toric subsemigroup by Proposition 
\ref{prop:bart}. But every toric subsemigroup $S$ of 
$\langle x \rangle$ is contained in $\langle x^n \rangle$.
Indeed, we have $S^n \subset \langle x^n \rangle$; moreover,
$S =S^n$, since the $n$th power map of $S$ restricts to 
a finite surjective homomorphism on any subtorus.

(ii) We claim that $E(\langle x \rangle) = E(\langle x^n \rangle)$
for any positive integer $n$. Indeed, $E(\langle x^n \rangle)$ 
is obviously contained in $E(\langle x \rangle)$. For the 
opposite inclusion, note that for any $y \in \langle x \rangle$, 
we have $y^n \in \langle x^n \rangle$. Taking $y$ idempotent 
yields the claim.

Now choose $n$ as in (i); then the desired statement follows 
from Proposition \ref{prop:env} in view of the claim.

(iii) is proved similarly; it implies (iv) in view of 
Propositions \ref{prop:bart} and \ref{prop:env} again.
\end{proof}

\begin{remark}\label{rem:env}
When the algebraic semigroup $S$ is irreducible and commutative, 
we clearly have $TM(x) \subset TM(S)$ and $TE(x) \subset TE(S)$ 
for all $x \in S$. Moreover, if the field $k$ is not locally finite 
(that is, $k$ is not the algebraic closure of a finite field), 
then there exists $x \in S$ such that $TM(x) = TM(S)$ and 
$TE(x) = TE(S)$: indeed, the torus $T = G(TM(S))$ has a point $x$ 
which generates a dense subgroup, and then 
$\langle x \rangle = TM(S)$. 

On the other hand, if $k$ is locally finite, then any algebraic 
semigroup $S$ is defined over some finite subfield $\bF_q$. Hence 
$S$ is the union of the finite subsemigroups $S(\bF_{q^n})$, 
where $n \geq 1$; it follows that $TM(x) = TE(x)$ consists of
a unique point, for any $x \in S$.
\end{remark}

\begin{example}\label{ex:lin}
Let $S$ be a linear algebraic semigroup, i.e., $S$ is isomorphic
to a closed subsemigroup of $\End(V)$ for some finite-dimensional 
vector space $V$. Given $x \in S$, we describe the combinatorial
data of the toric monoid $TM(x)$ in terms of the spectrum of 
$x$ (viewed as a linear operator on $V$).

Consider the decomposition of $V$ into generalized eigenspaces 
for $x$,
\[ V = \bigoplus_{\lambda} V_{\lambda}, \]
where $\lambda$ runs over the spectrum. Since $x$ acts nilpotently 
on $V_0$ and $TM(x) = TM(x^n)$ for any positive integer $n$, 
we may assume that $x$ acts trivially on $V_0$. Then we may 
replace $V$ with $\bigoplus_{\lambda \neq 0} V_{\lambda}$, 
and hence assume that $x \in \GL(V)$. In that case, 
$\langle x \rangle \cap \GL(V)$ is a closed subsemigroup of
the algebraic group $\GL(V)$, and hence a closed subgroup; 
in particular, $\id_V \in \langle x \rangle$. So 
$\langle x \rangle$ is a closed submonoid of $\End(V)$, and
$G(\langle x \rangle) = \langle x \rangle \cap \GL(V)$.
Thus, $TM(x)$ is the closure of the largest subtorus, $T$,
of the commutative linear algebraic group 
$G := G(\langle x \rangle)$.

In view of the Jordan decompositions $x = x_s x_u$ and 
$G = G_s \times G_u$, we see that $T$ is the largest subtorus 
of $G(\langle x_s \rangle)$. Thus, we may replace $x$ with 
$x_s$ and assume that 
\[ x = \diag(\lambda_1,\ldots, \lambda_r), \] 
where $\lambda_i \in k^*$ for $i = 1, \ldots,r$; we may
further assume that $\lambda_1,\ldots,\lambda_r$ are pairwise
distinct. Then $G(\langle x \rangle)$ is contained in the diagonal 
torus $\bG_m^r$, and the character group of the quotient 
torus $\bG_m^r/G(\langle x \rangle)$ is the subgroup of 
$\Hom(\bG_m^r,\bG_m) \cong \bZ^r$ consisting of those
tuples $(a_1,\ldots,a_r)$ such that 
$\prod_{i=1}^r \lambda_i^{a_i} = 1$. Via the correspondence
between closed subgroups of $\bG_m^r$ and subgroups of $\bZ^r$,
it follows that the character group of $G(\langle x \rangle)$
is the subgroup of $k^*$ generated by 
$\lambda_1,\ldots,\lambda_r$. As a consequence, 
\emph{the free abelian group $\Lambda$ associated with 
the toric monoid $TM(x)$ is isomorphic to the subgroup of $k^*$ 
generated by the $n$th powers of the nonzero eigenvalues, 
for $n$ sufficiently divisible} (so that this subgroup is indeed
free).

Moreover, since the coordinate functions generate the algebra
$\cO(\langle x \rangle)$ and are eigenvectors of 
$G(\langle x \rangle)$, we see that 
\emph{the monoid $\cM$ associated with $TM(x)$ is isomorphic 
to the submonoid of $(k, \times)$ generated by the $n$th powers 
of the eigenvalues, for $n$ sufficiently divisible} (so that 
this monoid embeds indeed into a free abelian group).

Next, we describe the idempotents of $\langle x \rangle$,
where $x$ is a diagonal matrix as above.
These idempotents are among those of the subalgebra 
of $\End(V)$ consisting of all diagonal matrices; hence 
they are of the form 
\[ e_I := \sum_{i \in I} e_i, \]
where $I \subset \{ 1,\ldots,r \}$, and $e_i$ denotes the
projection to the $i$th coordinate subspace. To determine
when $e_I \in \langle x \rangle$, we view $\cM$ as the 
monoid generated by $t_1,\ldots,t_r$, with relations of 
the form
\[ \prod_{a \in A} t_i^{a_i} = \prod_{b \in B} t_j^{b_j}, \]
where $A$, $B$ are disjoint subsets of $\{ 1,\ldots,r \}$
(one of them being possibly empty), and $a_i$, $b_j$ are 
positive integers; such relations will be called 
\emph{primitive}. Since 
\[ E(\langle x \rangle) = E(TM(x)) = \Hom(\cM, \{ 1,0 \}), \]
it follows that $e_I \in \langle x \rangle$ if and only if
either $I$ contains $A \cup B$, or $I$ meets the complements
of $A$ and of $B$. 

In particular, the largest idempotent of $\langle x \rangle$
is $e_{1,\ldots,r} = \id_V$ (this may of course be seen directly).
The smallest idempotent is $e_I$, where $i \in I$ if and only
if the $i$th coordinate is invertible on $\langle x \rangle$;
this is equivalent to the existence of a primitive relation of 
the form $\prod_{a \in A} t_i^{a_i} = 1$, where $i \in A$.
\end{example}

\subsection{Algebraic semigroups with finitely many  idempotents}
\label{subsec:fin}

Consider an algebraic semigroup $S$ such that $E(S)$ is finite. 
Then $S$ has a smallest idempotent, as shown by the proof of 
Proposition \ref{prop:com} (ii). Also, when $S$ is irreducible, 
it has a largest idempotent by the proof of Proposition 
\ref{prop:max}. We now obtain criteria for an idempotent of 
an algebraic semigroup to be the smallest or the largest one 
(if they exist): 

\begin{proposition}\label{prop:minmax}
Let $S$ be an algebraic semigroup, and $e \in S$ an idempotent.

\smallskip

\noindent
{\rm (i)} $e$ is the smallest idempotent if and only if
$e$ is central and $eS$ is a group.

\smallskip

\noindent
{\rm (ii)} When $S$ is irreducible, $e$ is the largest idempotent 
if and only if $e$ is central and there exists a positive integer
$n$ such that $x^n \in eS$ for all $x \in S$.
\end{proposition}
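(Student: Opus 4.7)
The $(\Leftarrow)$ directions are formal. In (i), if $e$ is central and $eS$ is a group with identity $e$, then any $f \in E(S)$ satisfies $(ef)^2 = ef$ by centrality, so $ef \in E(eS) = \{e\}$ and symmetrically $fe = e$, giving $e \leq f$. In (ii), centrality plus $x^n \in eS$ yields $f = f^n \in eS$, so $f = ef = fe$ for any $f \in E(S)$, i.e., $f \leq e$.

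For (i) $(\Rightarrow)$, assume $e$ is the smallest idempotent. Antisymmetry of $\leq$ gives $E(eSe) = \{e\}$, since every $f \in E(eSe)$ satisfies both $f \leq e$ and $e \leq f$, and the same argument gives $E(Se) = \{e\}$. To show $eSe$ is a group, I take $x \in eSe$, choose $k$ so that $\langle x^k \rangle$ is irreducible (\cite[Lem.~1]{Brion-Renner}), and apply \cite[Cor.~1]{Brion-Renner} to the commutative irreducible monoid $\langle x^k \rangle$, whose sole idempotent is $e$: this produces a nonempty open subset of units in $\langle x^k \rangle$, which meets the dense set of powers of $x^k$, yielding some $x^{km}$ invertible, hence $x$ itself invertible by commutativity. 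For centrality, given $x \in Se$ write $x = xe$; then $ex = exe \in eSe$, so $y := (ex)^{-1}$ exists in $eSe$, and using $y = ye$ one computes $yx = y(ex) = e$, while
\[
(xy)^2 \;=\; x(yx)y \;=\; xey \;=\; xy
\]
together with $xy \cdot e = x(ye) = xy$ shows $xy \in E(Se) = \{e\}$, so $xy = e$ as well. From $xy = yx = e$, $x = x(yx) = (xy)x = ex$, so $Se \subseteq eS$; the dual argument gives $eS \subseteq Se$. Hence for every $w \in S$, $ew \in Se$ and $we \in eS$, whence $ew = ewe = we$; thus $e$ is central and $eS = eSe$ is the group just constructed.

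For (ii) $(\Rightarrow)$, assume $S$ is irreducible and $e$ is the largest idempotent. Any $x \in {}_eSe$ is idempotent, since $x^2 = (xe)x = x(ex) = xe = x$; maximality of $e$ then forces $x = ex$, and combined with the relation $ex = e$ this gives $x = e$. So ${}_eSe = \{e\}$ as a set, and by Corollary \ref{cor:red} it is reduced, hence $T_e({}_eSe) = T_e(S)_{0,1} = 0$. Symmetrically $T_e(S)_{1,0} = 0$, so Lemma \ref{lem:tang} gives $T_e(E(S)) = 0$; thus $e$ is isolated in $E(S)$ and Proposition \ref{prop:cent} yields centrality. For the uniform exponent, centrality turns $eS = eSe$ into a closed submonoid; for each $x \in S$, I choose $k$ so that $\langle x^k \rangle$ is irreducible, let $f$ be its largest idempotent (so $f \leq e$ and $fS \subseteq eS$), and invoke Proposition \ref{prop:max} to produce $m$ with $x^{km} \in f\langle x^k \rangle \subseteq eS$. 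The closed subsets $B_m := \{x \in S : x^m \in eS\}$ then form an ascending chain whose union contains $S(k)$; Lemma \ref{lem:const}, applied after extending scalars to an uncountable algebraically closed field if necessary, forces some $B_n = S$ by irreducibility, giving the required uniform exponent.

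The two centrality arguments carry the main difficulty. In (i) the decisive observation is the idempotency of $xy$, which exploits the already-established group structure on $eSe$ to collapse $Se$ onto $eSe$; without the group structure one cannot even form the inverse $(ex)^{-1}$, and without that the identity $xy = e$ has no foothold. In (ii) centrality is painless once one notices that ${}_eSe$ and $eS_e$ reduce set-theoretically to $\{e\}$, and the real subtlety is the uniformization of the exponent, which requires Lemma \ref{lem:const} and, for countable $k$, an extension of the base field.
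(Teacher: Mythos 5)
Your proof is correct; the two $(\Leftarrow)$ implications are identical to the paper's, but both forward implications follow genuinely different routes. In (i), the paper gets everything from two citations to \cite{Brion12}: minimality of $e$ gives $SeS=eSe$, hence centrality, and $eSe$ is a group because its unique idempotent is its identity. You instead reprove the group property by applying \cite[Cor.~1]{Brion-Renner} to the cyclic subsemigroups $\langle x^k\rangle$, and then derive centrality by exhibiting $y=(ex)^{-1}$ and forcing $xy=e$ through $E(Se)=\{e\}$ --- longer, but self-contained and it isolates exactly where the group structure is used. In (ii), the paper deduces centrality from Lemma \ref{lem:Ue}: since ${}_e{}Se=eS_e=\{e\}$, the subsets $Ue$ and $eU$ land in $eSe$, and irreducibility makes them dense in $Se$ and $eS$, so $Se,eS\subset eSe$. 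You go instead through $T_e(E(S))=0$ and Proposition \ref{prop:cent}; that is heavier machinery for the same conclusion (the relevant direction of Proposition \ref{prop:cent} is itself proved from essentially your starting observation), but valid. The real divergence is the uniform exponent: \cite[Cor.~1]{Brion-Renner} already supplies a single $n$ valid for all of $S$, which is how the paper concludes in one line, while you prove only a pointwise statement and then uniformize with the closed sets $B_m$ and Lemma \ref{lem:const}. This works, but leaves one step implicit: after extending scalars to an uncountable field $K$ you must rerun the pointwise argument for $x\in S(K)$, which requires $e$ to remain the largest idempotent of $S_K$. That is true --- $E(S)_{\red}$ is a closed subvariety of the closed subset $eSe$, and this inclusion persists under base change, so every idempotent of $S_K$ still satisfies $f=ef=fe$ --- but it deserves a sentence; quoting the uniform form of \cite[Cor.~1]{Brion-Renner} sidesteps the point entirely.
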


\begin{proof}
(i) Assume that $e$ is the smallest idempotent. Then both
$eS_e$ and $_eSe$ consist of the unique point $e$. Since
$e$ is a minimal idempotent, it follows that $SeS = eSe$ 
by \cite[Prop.~5 (ii)]{Brion12}. In particular, $xe \in eSe$ 
for any $x \in S$. As a consequence, $xe = exe$; likewise, 
$ex = exe$ and hence $e$ is central. Thus, $eS = eSe$;
the latter is a group in view of [loc.~cit., Prop.~4 (ii)].

To show the converse implication, let $f \in S$ be an 
idempotent. Then $ef$ is an idempotent of the group $eS$, 
and hence $ef = e = fe$.

(ii) If $e$ is the largest idempotent, then $eS_e$ and
$_eSe$ still consist of the unique point $e$. By Lemma
\ref{lem:Ue}, it follows that $eSe$ contains $Ue$;
likewise, $eSe$ contains $eU$. Since $S$ is irreducible, 
$eSe$ contains both $Se$ and $eS$. Arguing as in (i), 
this yields that $e$ is central. Also, there exists 
a positive integer $n$ such that every $x \in S$ 
satisfies $x^n \in e(x) S$ for some idempotent $e(x)$, 
by \cite[Cor.~1]{Brion-Renner}. Since $e(x) \leq e$, 
it follows that $x^n \in eS$.

For the converse implication, let again $f \in S$ 
be an idempotent. Then $f = f^n \in eS$, and hence
$f = f e = e f$; in other words, $f \leq e$.
\end{proof}

We now show that the structure of an irreducible algebraic
monoid having finitely many idempotents reduces somehow to 
that of a closed irreducible submonoid having a zero and 
the same idempotents:

\begin{proposition}\label{prop:red}
The following conditions are equivalent for an irreducible 
algebraic semigroup $S$:

\noindent
{\rm (i)} $E(S)$ is finite.

\noindent
{\rm (ii)} $S$ has a smallest idempotent, $e_0$, and a largest one,
$e_1$. Moreover, $E(M_S)$ is finite, where $M_S$ denotes the reduced
neutral component of the submonoid scheme $e_1 S_{e_0} \subset S$
(with unit $e_1$ and zero $e_0$).

Under either condition, $E(S)$ is reduced, central in $S$, and
equals $E(M_S)$. 
\end{proposition}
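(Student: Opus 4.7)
My plan is to reduce both implications to a single inclusion $E(S) \subseteq M_S$; combined with the obvious $E(M_S) \subseteq E(S)$ this gives $E(S) = E(M_S)$, from which finiteness transfers in either direction, and then Proposition \ref{prop:cent} supplies reducedness and centrality.

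I begin by establishing the existence of $e_0$ and $e_1$ and reducing to the monoid case. Under (i), $E(S)$ is finite so each idempotent is isolated; by Proposition \ref{prop:cent} every idempotent is then central and $E(S)$ is reduced, and the product of the (commuting) finitely many idempotents is a smallest idempotent $e_0$. For $e_1$, \cite[Cor.~1]{Brion-Renner} yields $n \geq 1$ and a nonempty open $U \subseteq S$ on which $x^n \in e(x)S$ for an idempotent $e(x) \in E(S)$; finiteness of $E(S)$ together with irreducibility of $S$ produces a dense open subset on which a single $e_1$ works, and continuity extends $x^n = e_1 x^n = x^n e_1$ to all of $S$, forcing $f \leq e_1$ for every $f \in E(S)$. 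Under (ii) the centrality of $e_0$ and $e_1$ is immediate from Proposition \ref{prop:minmax}. In either case, replacing $S$ with the closed irreducible submonoid $e_1 S = e_1 S e_1$ (which contains every idempotent and leaves $e_1 S_{e_0}$ and $M_S$ unchanged) lets me assume $e_1 = 1_S$; then $G := G(S)$ is a connected algebraic group dense in $S$.

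The core step is the inclusion $E(S) \subseteq M_S$. Fix $e \in E(S)$ and a maximal torus $T \subseteq G$. By Theorem \ref{thm:ud}, the $G$-conjugacy class of $e$ meets $\overline{T}$, so there exists $g \in G$ with $e' := g e g^{-1} \in E(\overline{T})$. The closure $\overline{T}$ is an irreducible commutative algebraic monoid containing $1$ and also $e_0$ (because $\{e_0\}$, being the conjugacy class of the central idempotent $e_0$, must meet $\overline{T}$). Applying Theorem \ref{thm:com} to $\overline{T}$, let $N$ be the smallest closed irreducible subsemigroup of $\overline{T}$ containing $E(\overline{T})$; then $N$ is a toric monoid, necessarily with unit $1$ and smallest (hence zero) idempotent $e_0$, so $N \subseteq S_{e_0}$, and irreducibility forces $N \subseteq M_S$. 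In particular $e' \in M_S$. Finally, $G$-conjugation preserves $S_{e_0}$ because $e_0$ is central, and since $G$ is connected it permutes connected components of $S_{e_0}$ trivially, so $g^{-1} M_S g = M_S$; thus $e = g^{-1} e' g \in M_S$.

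Combined with $E(M_S) \subseteq E(S)$ this gives $E(S) = E(M_S)$: under (i) the scheme $M_S$ satisfies (ii), while under (ii) it transfers finiteness to $E(S)$, establishing (i) and then, via Proposition \ref{prop:cent}, centrality and reducedness. The main obstacle is the core step, which hinges on three ingredients working in concert: Theorem \ref{thm:ud} to conjugate an arbitrary $e$ into $\overline{T}$, Theorem \ref{thm:com} to place $E(\overline{T})$ inside a closed irreducible submonoid with zero $e_0$ that is trapped in $M_S$, and the $G$-invariance of $M_S$ (from centrality of $e_0$ and connectedness of $G$) to carry $e'$ back to $e$.
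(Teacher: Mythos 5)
Your argument is correct, but it takes a genuinely different route from the paper for the one nontrivial point, namely the inclusion $E(S)\subseteq M_S$. The paper's proof is essentially two lines: by the very definitions of $e_0$ and $e_1$, every idempotent $f$ satisfies $e_0\leq f\leq e_1$ and hence lies in $e_1 S_{e_0}$, so $E(S)=E(e_1S_{e_0})$ as sets; the identification $E(e_1S_{e_0})=E(M_S)$ is then obtained by citing Theorem \ref{thm:ud} (the assertion $E(M)=E(M^o)$) for the monoid scheme $e_1S_{e_0}$, and the scheme-theoretic statements follow from Proposition \ref{prop:cent} exactly as in your last paragraph. You instead prove the stronger inclusion $E(S)\subseteq M_S$ directly: conjugate an arbitrary idempotent into $\overline{T}$ via the conjugacy-class assertion of Theorem \ref{thm:ud}, trap $E(\overline{T})$ inside the toric envelope $\overline{T_0}$, which has unit $e_1$ and zero $e_0$ and therefore sits inside the neutral component of $e_1S_{e_0}$, and carry the idempotent back using the $G$-invariance of $M_S$. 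This is longer, but it is more self-contained and arguably more careful: applying Theorem \ref{thm:ud} to $e_1S_{e_0}$ as the paper does presupposes that this monoid scheme is unit dense, which is not verified there (and in positive characteristic it need not even be reduced, cf.\ Remark \ref{rem:sub} (i)), whereas your argument only applies Theorem \ref{thm:ud} to the irreducible monoid $e_1S$ itself and leans on Proposition \ref{prop:env}, both of which are available unconditionally. Two small points you leave implicit but which are easily filled exactly as in the paper: the extraction of a single $e_1$ from \cite[Cor.~1]{Brion-Renner} uses the finiteness of $E(S)$ together with irreducibility as in the proof of Proposition \ref{prop:max}, and the final equality $E(S)=E(M_S)$ \emph{as schemes} follows from the set-theoretic equality once $E(S)$ is known to be reduced.
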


\begin{proof}
(i)$\Rightarrow$(ii) follows from the discussion preceding 
Proposition \ref{prop:minmax}. For the converse, note that
$E(M_S) = E(e_1 S_{e_0})$ as sets, by the definitions of $e_0$ 
and $e_1$; also, $E(e_1 S_{e_0}) = E(M_S)$ as sets, in view of 
Theorem \ref{thm:ud}. 

If $E(S)$ is finite, then it is reduced and central in $S$
by Proposition \ref{prop:cent}. Since $E(S) = E(M_S)$ as sets, 
it follows that this also holds as schemes.
\end{proof}

The above reduction motivates the following statement, 
due to Putcha (see \cite[Cor.~10]{Putcha82}); we present 
an alternative proof, based on Renner's construction of 
the ``largest reductive quotient'' of an irreducible 
linear algebraic monoid.

\begin{proposition}\label{prop:fin}
Let $M$ be an irreducible algebraic monoid having a zero. 
If $E(M)$ is finite, then $G(M)$ is solvable.
\end{proposition}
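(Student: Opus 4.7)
The plan is to reduce the problem to the reductive case via Renner's largest reductive quotient, and then to exploit the Weyl group action on the face lattice of the closure of a maximal torus.

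First, since $M$ has a zero, it is linear by \cite[Cor.~3.3]{Brion-Rittatore}. Write $G = G(M)$, $R = R_u(G)$ for its unipotent radical, and $\tilde G := G/R$, a connected reductive algebraic group. Applying Renner's construction of the largest reductive quotient (see \cite[Ch.~5]{Renner05}), we obtain an irreducible reductive algebraic monoid $\tilde M$ together with a surjective homomorphism of algebraic monoids $\pi : M \to \tilde M$ extending the quotient morphism $G \to \tilde G$.

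Next I would verify that the hypotheses pass to $\tilde M$. Since $\pi$ is a surjective monoid homomorphism and $0 \in M$ is a zero, $\pi(0)$ is a zero of $\tilde M$. Moreover, for any $\tilde e \in E(\tilde M)$, the preimage $\pi^{-1}(\tilde e)$ is a closed subsemigroup of $M$, which carries an idempotent by Proposition \ref{prop:exist}; hence $E(M) \to E(\tilde M)$ is surjective and $E(\tilde M)$ is finite.

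The crux is then to show that an irreducible reductive algebraic monoid $\tilde M$ with zero and finite $E(\tilde M)$ has a torus as unit group. By Proposition \ref{prop:cent}, every idempotent of $\tilde M$ is central; in particular, every element of the normalizer $N_{\tilde G}(\tilde T)$ of a maximal torus $\tilde T \subset \tilde G$ commutes with every idempotent of $\overline{\tilde T}$. By Theorem \ref{thm:ud}, $E(\tilde M) = E(\overline{\tilde T})$, so the Weyl group $W = N_{\tilde G}(\tilde T)/\tilde T$ acts trivially on $E(\overline{\tilde T})$. Now $\overline{\tilde T}$ is a toric monoid with zero, and by the discussion in Subsection \ref{subsec:toric} its idempotents are in bijection with the faces of a strictly convex rational polyhedral cone $C$ in the real character space of $\tilde T$. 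The action of $W$ on $\tilde T$ induces a linear action on that space preserving the integral monoid $\cM$, and by the above $W$ stabilizes each face of $C$ setwise. A short argument then shows $W$ must fix every extremal ray of $C$ pointwise (acting as $-1$ on an integral ray would swap positive and negative lattice points of $\cM$); since $C$ is spanned by its extremal rays, $W$ acts trivially on $\tilde T$. Hence $W = 1$, so $\tilde G = \tilde T$ is a torus, and $G$ is solvable.

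The main obstacle will be handling Renner's reductive quotient construction cleanly — in particular, verifying that $\pi$ preserves the zero and that idempotents in $\tilde M$ lift to idempotents in $M$, so that finiteness of $E(M)$ really does force finiteness of $E(\tilde M)$. Once that reduction is in place, the combinatorial argument with $W$ acting on the face lattice of $C$ is straightforward.
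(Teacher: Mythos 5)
Your proposal is correct and follows essentially the same route as the paper: pass to Renner's largest reductive quotient, observe that finiteness forces all idempotents to be central (Proposition \ref{prop:cent}) and to lie in $\overline{\tilde T}$ (Theorem \ref{thm:ud}), and then conclude that the Weyl group fixes every face, hence every extremal ray, hence all of the pointed cone of $\overline{\tilde T}$, so $W=1$. The only cosmetic difference is that you transfer finiteness of the idempotent set via preimages of idempotents under $\pi$ (using Proposition \ref{prop:exist}), whereas the paper gets it directly from the isomorphism $\overline{T}\to\overline{T'}$; also note that the reason an extremal ray stabilized by a finite-order lattice automorphism is fixed pointwise is simply that the positive scalar by which it acts is a root of unity, rather than the ``$-1$'' consideration in your parenthesis.
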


\begin{proof}
By \cite[Thm.~2.5]{Renner85}, there exist an irreducible
algebraic monoid $M'$ equipped with a homomorphism of algebraic 
monoids $\rho  : M \to M'$ that satisfies the following conditions:

\noindent
(i) $\rho$ restricts to a surjective homomorphism
$G := G(M) \to G(M') =: G'$ with kernel the unipotent radical, 
$R_u(G)$.

\noindent
(ii) $\rho$ restricts to an isomorphism 
$\overline{T} \to \overline{T'}$, where $T$ denotes 
a maximal subtorus of $G$, and $T'$ its image under $\rho$.

As a consequence, $G'$ is reductive, that is, $M'$ is 
a reductive monoid. Also, since each conjugacy class of
idempotents in $M$ (resp. $M'$) meets $\overline{T}$
(resp. $\overline{T'}$) and since every idempotent of $M$
is central, we see that $E(M')$ equals $E(\overline{T'})$ and 
is contained in the center of $M'$.

Let $\cC$ be the cone associated with the toric monoid 
$\overline{T'}$. Then $\cC$ is stable under the action of the
Weyl group $W' := W(G',T')$ on the character group of $T'$.
Recall from Subsection \ref{subsec:toric} that 
$E(\overline{T'})$ is in a bijective correspondence with 
the set of faces of $\cC$; this correspondence is compatible 
with the natural actions of $W'$. But $W'$ acts trivially on 
the idempotents of $\overline{T'}$, since they are all 
central. Thus, $W'$ stabilizes every face of $\cC$. 
It follows that $W'$ fixes pointwise every extremal ray;
hence $W'$ fixes pointwise the whole cone $\cC$. Since the 
interior of $\cC$ is nonempty, $W'$ must be trivial, i.e., 
$G'$ is a torus. Hence $G$ is solvable.
\end{proof}

\medskip

\noindent
{\bf Acknowledgements.} Most of this article was written during 
my participation in February 2013 to the Thematic Program on 
Torsors, Nonassociative Algebras and Cohomological Invariants, 
held at the Fields Institute. I warmly thank the organizers of 
the program for their invitation, and the Fields Institute for 
providing excellent working conditions. I also thank Mohan
Putcha and Wenxue Huang for very helpful e-mail exchanges.

\medskip

{\sc Universit\'e de Grenoble I,
D\'epartement de Math\'ematiques,
Institut Fourier, UMR 5582, 
38402 Saint-Martin d'H\`eres Cedex, France}

\emph{E-mail address}: {\tt Michel.Brion@ujf-grenoble.fr}

\end{document}